\newcommand\R{\mathbb{R}}
 \newtheorem{thm}{Theorem}[section]
 \newtheorem{cor}[thm]{Corollary}
 \newtheorem{lem}[thm]{Lemma}
 \newtheorem{prop}[thm]{Proposition}
 \theoremstyle{definition}
 \newtheorem{defn}[thm]{Definition}
 \theoremstyle{remark}
 \numberwithin{equation}{section}
\begin{document}

%-------------------------------------------------------------------------

\title[An Example for birkjour]
{Uniqueness of the Short-Time Linear Canonical Transform Phase Retrieval }

%----------Author 1
\author{Yali Dong}

\address{%
School of Mathematics  \\
Zhengzhou University of Aeronautics\\
Zhengzhou \\
China}

\email{yldong2022@163.com}

\thanks{This work was supported by Nation Natural Science Foundation of China (Grant Nos. 11971348; 12071230; 12471131) and the Basic Research Projects of Key Scientific Research Projects Plan in Henan Higher Education Institutions (25ZX013)}
%----------Author 2
\author{Rui Liu}
\address{School of Mathematical Sciences and LMPC\br
Nankai University \br
Tianjin \br
China}
\email{ruiliu@nankai.edu.cn}

\author{Heying Wang}
\address{School of Mathematical Sciences and LMPC\br
Nankai University \br
Tianjin \br
China}
\email{heyingwang@mail.nankai.edu.cn}
%----------classification, keywords, date
\subjclass{Primary 32A15, 42A38; Secondary 94A12, 94A20.}

\keywords{Phase retrieval, Short-time linear canonical transform, Sampling, Square-root lattice }

\date{January 1, 2025}
%----------additions
%\dedicatory{To my boss}
%%% ----------------------------------------------------------------------

\begin{abstract}
In this paper, we focus on the problem of phase retrieval from intensity measurements of the Short-Time Linear Canonical Transform (STLCT). Specifically, we show that the STLCT allows for the unique recovery of any square-integrable function through phaseless STLCT sampling on rectangular square-root lattices. When turning to the uniform lattices, we establish counterexamples about the STLCT phase retrieval problems in $L^2(\R).$ 
% However, it fails to do so on a uniform lattice, as we have found counterexamples. 
Nevertheless, for functions in band-limited function spaces, phase retrieval results on uniform lattices can still be accomplished. 
\end{abstract}

%%% ----------------------------------------------------------------------
\maketitle
%%% ----------------------------------------------------------------------
%\tableofcontents

\section{Introduction}\ 

Generally speaking, the phase retrieval problem refers to the problem of recovering a signal from phaseless linear measurements. 
% Typical instances of phase retrieval tasks include the question of recovering a function from the magnitude of its Fourier transform. 
With the applicative relations with the Fourier transform, phase retrieval problems arise in various areas of natural sciences, ranging from signal processing to quantum mechanics. The related problems also attract plenty of attention in mathematics recently, which leads to various mathematical theories emerging. 

The linear canonical transform (LCT), first mentioned in the 1970s, is an integral transform characterized by four parameters \(a, b,c, d\). It serves as a generalization of the Fourier transform, the fractional Fourier transform, the Fresnel transform, and the scaling operations. Under the Bargmann transform, Dong and Zhu show in \cite{DZ} that the LCT is unitarily equivalent to some two-parameter family of integral operators. The LCT was applied in optics in the early years, where it can represent a multitude of optical systems. Nowadays, it also applies to radar system analysis, filter design, pattern recognition, phase retrieval, and various other fields.

Chen and Qu study phase retrieval from linear canonical transforms (LCT) in \cite{CY}, showing that compactly supported functions can be determined up to global phase or conjugate reflection using magnitudes of multiple LCT measurements. They construct explicit masks and analyze the reduction of ambiguity, extending the results from fractional Fourier transforms to the broader LCT framework.

% The Short-Time Fourier Transform (STFT) can be viewed as a Fourier transform with a window function. 
% It is known as the uniqueness problem in STFT phase retrieval and has received considerable attention in recent years.  
To get some further analysis about the localized information of signals, the Short-Time Fourier Transform (STFT) has been formulated as the Fourier transform with a window function. Consequently, phase retrieval problems related to the STFT have been intensively studied in recent years. Grohs et al. established the discretization barriers in \cite{LG}, which reveals that if $\Lambda$ is a lattice, then functions in $L^2(\mathbb{R}^d)$ cannot be uniquely determined by samples of the form of $|V_g f(\Lambda)|$, regardless of the lattice choice $\Lambda$ or the window function. 
Moreover, Alaifari1 and Wellershoff \cite{RM} demonstrate that for any lattice, one can construct functions in \(L^{2}(\mathbb{R})\), which do not agree up to global phase, but whose magnitudes sampled on the lattice agree under the STFT with the Gabor window function. 
Additionally, 
Wellershoff \cite{W} shows that for functions in general bandlimited function spaces \(L^{p}([-B, B])\),  phase recovery can still be accomplished on a uniform lattice.
Recently, driven by this discretization barrier, Grohs et al. in \cite{FCM} centers around the initiation of a novel sampling scheme which allows for unique recovery of any
square-integrable function via phaseless STFT-sampling.  For more general nonuniform sampling sets. For more informations, see \cite{K,  W,  FCM, LG, PL, RM}.

The short-time linear canonical transform (STLCT) is replacing the Fourier kernel with the LCT kernel in the STFT definition, enabling local time-frequency analysis with high resolution and cross-term suppression. It has been extensively applied in speech, acoustics, and many other signal processing domains.

Li and Zhang \cite{Z, ZL} investigate phase retrieval using STLCT, establishing uniqueness results for square-integrable functions, nonseparable real signals, and complex bandlimited signals in Paley–Wiener spaces and cardinal B-spline spaces. They show that STLCT magnitude measurements, under some mild conditions on window functions, can uniquely determine signals up to global phase.

Inspired by previous research, we focus on the problem of phase retrieval from intensity measurements of the Short-Time Linear Canonical Transform (STLCT). Specifically, we demonstrate that square-root lattices enable the unique recovery of any square-integrable function via phaseless STFT-sampling. However, it fails to do so on a uniform lattice, as we have found counterexamples. Nevertheless, for functions in bandlimited function spaces, phase recovery can still be accomplished on a uniform lattice. 

In this article, we first focus on the phase retrieval of the STLCT. We prove that functions in $L^2(\R)$ can be uniquely determined by the STLCT magnitudes sampled on rectangular square-root lattices up to a global phase, which can be stated as the following theorem. 

% The space of analytic functions we are considering will be denoted by $\mathcal{O}_m^n(\mathbb{C}^{d'})$. For $a, b \in \mathbb{R}_{>0}^{d'} := (0, \infty)^{d'}$, it is defined by
% \[
% \mathcal{O}_m^n(\mathbb{C}^{d'}) := \left\{ F \in \mathcal{O}(\mathbb{C}^{d'}) : |F(x + iy)| \lesssim \prod_{j=1}^{d'} e^{-m_j x_j^2} e^{n_j y_j^2} \right\},
% \]
% where \(\mathcal{O}(\mathbb{C}^{d'})\) denotes the space of all the functions of the complex variables 
% \(d'\). Window functions that belong to the
% function space $\mathcal{O}_m^n(\mathbb{C}^{d'})$ lead to phaseless sampling results from square-root lattices \(\lambda = A(\sqrt{\mathbb{Z}})^{2d'}\) for some invertible matrix \(A\in GL( d', \mathbb{R})\), which is a set consisting of all \(d'\times d'\) real invertible matrices, and
% \(\sqrt{\mathbb{Z}}=\{\pm\sqrt{n}: n\in \mathbb{N}_{0}\}\).
% This is the content of the following theorem, which constitutes the main result of the
% article.

\begin{thm}
Let \(m, n \in \mathbb{R}_{>0}^{d'}\), and let \(0 \neq \varphi \in \mathcal{O}_{m}^{n}(\mathbb{C}^{d'})\) be a window function. Suppose that \(\Lambda = A(\sqrt{\mathbb{Z}})^{2d'}\) is a rectangular square-root lattice such that the generating matrix \(A = \mathrm{diag}(\tau_{1}, \ldots, \tau_{d'}, v_{1}, \ldots, v_{d'})\) with \(\tau, v \in \mathbb{R}_{>0}^{d'}\) satisfies
\[
\tau_{j} < \frac{1}{\sqrt{2n_{j}}e}, \quad v_{j} < b\sqrt{\frac{2m_{j}}{e}}, \quad j \in \{1, \ldots, d'\}.
\]
Then the following statements are equivalent for every \(f, h \in L^{2}(\mathbb{R}^{d'})\):
\[
(1) \ \left|V^{(A)}_{\varphi} f(\lambda)\right| = \left|V^{(A)}_{\varphi} h(\lambda)\right| \text{ for every } \lambda \in \Lambda, \quad (2) \ f \sim h.
\]
\end{thm}

Moreover, we show that for any lattice in the form \(m\mathbb{Z}\times \mathbb{R}\), one can
construct functions in \(L^{2}(\mathbb{R})\) that do not agree up to the global phase, but whose Gabor transform magnitudes sampled on the lattice agree. This also notes that parallel lines and uniform lattices can not be used in STLCT phase retrieval problems in $L^2(\R).$ I.e., we have the following theorem. 

\begin{thm}
Let \(f_{\pm} = (1 \pm i) M_{u}^{(A)}\varphi + (1 \mp i) M_{-u}^{(A)}\varphi\in L^{2}(\mathbb{R})\) with \(\varphi\) is a Gaussian window function,  then it holds that $f_{+}$ and $f_{-}$ do not agree up to global phase and yet
\begin{equation*}
 | \mathcal{G}^{(A)} f_{+}|=|\mathcal{G}^{(A)} f_{-}|
\end{equation*}
\end{thm}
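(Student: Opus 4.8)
The plan is to verify the two assertions separately: that $f_+$ and $f_-$ are genuinely inequivalent under a global phase, and that their STLCT spectrograms coincide pointwise. I would begin by recording the structural relation between the two signals. Since $\varphi$ is a Gaussian (in particular even and with known conjugation behaviour) and since $f_-$ is obtained from $f_+$ by simultaneously interchanging the coefficients $(1\pm i)\leftrightarrow(1\mp i)$ and the canonical shifts $M_u^{(A)}\leftrightarrow M_{-u}^{(A)}$, the map $f_+\mapsto f_-$ is implemented by a single symmetry of the defining data. I would make this precise from the explicit action of $M_{\pm u}^{(A)}$ on $\varphi$ and the evenness of the Gaussian, so that afterwards the magnitude identity can be read off from one symmetry of the transform rather than from two independent computations.

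For the magnitude identity the key tool is the covariance of the STLCT under the canonical modulation: for the Gaussian window one has a relation of the form $\mathcal{G}^{(A)}\bigl(M_{\pm u}^{(A)}\varphi\bigr)(x,\omega)=e^{\,i\rho_\pm(x,\omega)}\,G(x,\omega\mp u)$, where $G:=\mathcal{G}^{(A)}\varphi$ is the self-transform of the window and $\rho_\pm$ is an explicit real chirp phase coming from the LCT kernel. Substituting the two-term expansions of $f_\pm$ and expanding $\bigl|\mathcal{G}^{(A)}f_\pm\bigr|^2$ yields, in each case, two diagonal terms plus a single interference term. Because $|1+i|=|1-i|=\sqrt2$, the diagonal contributions $2\,|G(x,\omega-u)|^2+2\,|G(x,\omega+u)|^2$ are identical for $f_+$ and $f_-$, so the whole problem reduces to showing that the two interference terms coincide.

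The cross term carries the factor $(1+i)\overline{(1-i)}=2i$ for $f_+$ and $(1-i)\overline{(1+i)}=-2i$ for $f_-$, multiplied by the interference phase built from $\rho_+-\rho_-$ and from the argument of $G(x,\omega-u)\overline{G(x,\omega+u)}$. I would compute $G$ explicitly (a standard Gaussian integral producing a complex-Gaussian prefactor), extract this interference phase, and show that the opposite signs $\pm2i$ are exactly compensated by the conjugation/parity symmetry of $G$ inherited from the evenness of $\varphi$ and the structure of the LCT kernel, so that the two interference terms agree and hence $\bigl|\mathcal{G}^{(A)}f_+\bigr|=\bigl|\mathcal{G}^{(A)}f_-\bigr|$. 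This phase bookkeeping — verifying that the engineered coefficients $(1\pm i)$ interlock with the LCT chirp phase so that the cross terms match — is the main obstacle; everything preceding it is routine once the covariance relation and the closed form of $G$ are in hand.

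Finally, to see that $f_+\not\sim f_-$, I would use that $M_u^{(A)}\varphi$ and $M_{-u}^{(A)}\varphi$ are linearly independent for $u\neq0$ (distinct canonical shifts of a Gaussian). If $f_+=c\,f_-$ with $|c|=1$, then matching the coefficients of these two independent vectors forces simultaneously
\[
c=\frac{1+i}{1-i}=i \qquad\text{and}\qquad c=\frac{1-i}{1+i}=-i,
\]
which is impossible. Hence no unimodular $c$ relates $f_+$ and $f_-$, i.e. they do not agree up to a global phase, and together with the magnitude identity above this completes the counterexample.
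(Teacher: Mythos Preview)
Your plan has a genuine gap: you are trying to prove the magnitude identity on all of $\mathbb{R}^2$, and that is false. The statement (as the surrounding text and the paper's proof make explicit) is that $|\mathcal G^{(A)}f_+|=|\mathcal G^{(A)}f_-|$ only on the family of parallel lines $\frac{\pi b}{u}\mathbb Z\times\mathbb R$. If you carry out your own computation you will see this: after applying the covariance property and pulling out the common chirp, one is comparing $(1+i)A_+ + (1-i)A_-$ with $(1-i)A_+ + (1+i)A_-$, where $A_\pm=V_\varphi\varphi\bigl(x,\tfrac{\mu\mp u}{b}\bigr)$. The diagonal terms match, but the cross terms contribute $\mp 4\,\mathrm{Im}(A_+\overline{A_-})$ to $|\mathcal G^{(A)}f_\pm|^2$. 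Using the explicit Gaussian formula $V_\varphi\varphi(x,\omega)=\tfrac12 e^{-ix\omega/2}e^{-\frac12(x^2+\omega^2/4)}$, the phase of $A_+\overline{A_-}$ is exactly $e^{ixu/b}$, so $\mathrm{Im}(A_+\overline{A_-})$ vanishes \emph{only} when $x\in\frac{\pi b}{u}\mathbb Z$. There is no ``conjugation/parity symmetry of $G$'' that cancels the $\pm2i$; the sign flip in the cross term is real and is precisely what makes this a counterexample on lattices rather than a global identity.

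The rest of your outline is sound and matches the paper's argument: covariance of the STLCT reduces $\mathcal G^{(A)}f_\pm$ to a two-term expression in $V_\varphi\varphi$, and your linear-independence argument for $f_+\not\sim f_-$ is correct and cleaner than what the paper leaves implicit. You just need to drop the expectation of a global identity and instead finish by restricting $x$ to the discrete set where $e^{ixu/(2b)}\in\{\pm1,\pm i\}$, exactly as the paper does in its final chain of equalities.
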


Finally, for functions in band-limited function spaces, phase recovery can still be accomplished on the uniform lattices, which can be concluded as the following theorem. 

\begin{thm}
Let \(p \in [1, \infty)\), \(B > 0\), and \(m \in \left(0, \frac{1}{4B}\right)\). Then, the following are equivalent for \(f, g \in L^{p}([-B, B])\):  
\begin{enumerate}
    \item [(1)] \(f = e^{i\alpha} g\) for some \(\alpha \in \mathbb{R}\); 
    \item [(2)] \(|\mathcal{G}^{(A)}f| = |\mathcal{G}^{(A)}g|\) on \(\mathbb{N} \times mb\mathbb{Z}\).
\end{enumerate}   
\end{thm}

The whole article is arranged as follows: In Section 2, we introduce the linear canonical transform, the short-time linear canonical transform, and some basic properties. Some basic results about the phase retrieval problem of the linear canonical transform are also claimed. In Section 3, we mainly focus on determining functions in $L^2(\R)$ by using the STLCT sampled on rectangular square-root lattices up to a global phase. In Section 4, we first deliver the counterexample of the STLCT phase retrieval problem in $L^2(\R)$ sampled on parallel lines. The result of the STLCT phase retrieval problem of band-limited functions is also interpreted in this section. Finally, we add some additional details about the STLCT phase retrieval problem in $L^2(\R).$ 

\section{Preliminaries}

\subsection{Linear canonical transform}\ 

Firstly, we recall some related definitions of the linear canonical transform. 

\begin{defn}
A generalized modulation operator \(M_{\mu}^{(A)}\) characterized by a matrix
\( A = \begin{pmatrix} a & b \\ c & d \end{pmatrix} \), \( \det(A) = 1 \), with respect to variable \( t \) and effect on \( f \) is defined as
\[
M_{\mu}^{(A)} f(t) := e^{-i\left( \frac{a}{2b}t^2 - \frac{1}{b}t\mu + \frac{d}{2b}\mu^2 \right)} f(t).
\quad (2.1.1)
\]
\end{defn}
When $A=(0, 1, -1, 0)$, $M_{\mu}^{(A)} f(t)=M_{\mu}f(t)$.

The linear canonical transform (LCT) with parameter \( A = \begin{pmatrix} a & b \\ c & d \end{pmatrix} \) of a signal \( f(t) \) is defined
\[
L_A (f)(\mu) :=
\begin{cases}
\displaystyle\int_{\mathbb{R}^{d'}} K_A(t, \mu) f(t) \, dt, & b \neq 0; \\
\sqrt{d} e^{i \frac{c d}{2} \mu^2} f(d\mu), & b = 0.
\end{cases}
\quad (2.1.2)
\]
with
\[
K_A(t, \mu) := \frac{1}{\sqrt{ b}} e^{i\left( \frac{a}{2b} t^2 - \frac{1}{b} t \mu + \frac{d}{2b} \mu^2 \right)}, \quad b \neq 0. 
\]
where \( a, b, c, d \) are real or complex numbers satisfying \( ad - bc = 1 \), i.e., \( \det(A) = 1 \). \(L_{A}\) is a unitary operator on  
\(L^2(\mathbb{R}^{d'})\).
\[
L_{A}f(\mu)=\frac{1}{\sqrt{b}}e^{i\frac{d\mu^{2}}{2b}}\mathcal{F}(fe^{\frac{ia}{2b}t^{2}})(\frac{\mu}{b}), \quad (2. 1.3)
\]
Especially, when \(A=(0, 1, -1, 0)\), $L_{A}=\mathcal{F}$. 
\[
\mathcal{F}(f)(\mu)= \int_{\mathbb{R}}f(t)e^{-i\mu t}dt ;
\]
The extension of \(\mathcal{F}\) from 
\(L^1(\mathbb{R}^{d'}) \cap L^2(\mathbb{R}^{d'})\) to a unitary operator on \(L^2(\mathbb{R}^{d'})\) is carried out in the usual way. 

For any real numbers $a$ and $b,$ we define two unitary operators $T_{a}$,
$M_{b}$  and $D_{a}$ on $L^{2}(\mathbb{R}^{d'})$ as follows:
\[
T_{a}f(t)=f(t-a); \,\, M_{b}f(t)=e^{ibt}f(t), \]
where $T_{a}$ is known as the translation operator, $M_{b}$ is the modulation operator.

Some basic properties of the transforms above can be easily checked. We listed them in the following lemma. 

\begin{lem}
For every $\tau, \nu, x, \omega \in \mathbb{R}^{d'}$ and every $f,g \in L^{2}(\mathbb{R}^{d'})$, the operators defined above satisfy the relations
\begin{enumerate}
  \item [(1)] $\mathcal{F}T_{\tau}=M_{-\tau}\mathcal{F}$; $\mathcal{F}M_{\tau}=T_{\tau}\mathcal{F}$; $\mathcal{FR}=\mathcal{RF}$; $\mathcal{R}f(t)=f(-t)$;
%  \item [(2)] $\overline{\mathcal{F}f}=\mathcal{F}^{-1}\overline{f}$;
  \item [(2)] $T_{\tau}M_{x}=e^{-ix\tau}T_{\tau}M_{x}$;
   \item [(3)] $V_{g}(T_{\tau}M_{\nu}f)(x, \omega)=e^{-2\pi i\tau\omega}V_{g}f(x-\tau, \omega-\nu)$
  \item [(4)] $\mathcal{R}L_{A}=L_{A}\mathcal{R}$, $T_{\tau}\mathcal{R}=\mathcal{R}T_{-\tau}$
  \item [(5)] $T_{\tau}M_{\mu}^{(A)} f(t)= (M_{\mu}^{(A)} f)(t-\tau)
  = e^{-\frac{i}{b}\mu\tau+\frac{ia}{b}t\tau-\frac{ia}{2b}\tau^2}M_{\mu}^{(A)}T_{\tau}f(t)$
  \item [(6)] $
L_{\mathbf{A}} f_{t, -\frac{a t}{b}}(\mu) =L_{\mathbf{A}}M_{-\frac{a t}{b}}T_{t} f(\mu)= e^{i \left( \frac{d}{2b} \mu^2 - \frac{\mu t}{b} - \frac{a t^2}{2b} \right)}L_{\mathbf{A}} f(\mu).$
\end{enumerate}
\end{lem}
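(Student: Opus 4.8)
The plan is to verify each of the six identities by direct substitution into the defining integrals and exponential factors of Section~2.1; no auxiliary machinery is required, since every operator involved acts either pointwise (as \(T_{\tau}\), \(M_{x}\), \(M_{\mu}^{(A)}\), \(\mathcal{R}\)) or through an explicit kernel (as \(\mathcal{F}\) and \(L_{A}\)). First I would dispatch the Fourier-side relations in (1): writing \(\mathcal{F}(T_{\tau}f)(\mu)=\int f(t-\tau)e^{-i\mu t}\,dt\) and changing variables \(s=t-\tau\) produces the factor \(e^{-i\mu\tau}\), i.e.\ \(M_{-\tau}\mathcal{F}f(\mu)\); the dual identity \(\mathcal{F}M_{\tau}=T_{\tau}\mathcal{F}\) and \(\mathcal{F}\mathcal{R}=\mathcal{R}\mathcal{F}\) follow from the same one-line substitutions. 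The reflection statement \(T_{\tau}\mathcal{R}=\mathcal{R}T_{-\tau}\) in (4) and the commutation relation recorded in (2) are then immediate by applying both sides to \(f(t)\) and comparing the resulting arguments and pointwise exponential factors.

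For the kernel-level reflection identity in (4), the key observation is that \(K_{A}\) is jointly even, \(K_{A}(-t,-\mu)=K_{A}(t,\mu)\), which is visible directly from the quadratic form \(\tfrac{a}{2b}t^{2}-\tfrac{1}{b}t\mu+\tfrac{d}{2b}\mu^{2}\). Substituting \(s=-t\) in \(L_{A}(\mathcal{R}f)(\mu)=\int K_{A}(t,\mu)f(-t)\,dt\) and invoking this symmetry turns it into \(\int K_{A}(s,-\mu)f(s)\,ds=L_{A}f(-\mu)=\mathcal{R}L_{A}f(\mu)\), yielding \(\mathcal{R}L_{A}=L_{A}\mathcal{R}\). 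Identity (3) is the standard covariance of the short-time transform under the time-frequency shift \(T_{\tau}M_{\nu}\), proved by inserting the definition of \(V_{g}\) and absorbing the translation and modulation into the integration variable, which releases the phase \(e^{-2\pi i\tau\omega}\) and shifts the arguments to \((x-\tau,\omega-\nu)\).

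The computations that need genuine care are (5) and (6), where the quadratic phase of the generalized modulation interacts with a translation. For (5) I would expand \((M_{\mu}^{(A)}f)(t-\tau)\) by replacing \(t\) with \(t-\tau\) inside the exponent \(-i\bigl(\tfrac{a}{2b}t^{2}-\tfrac{1}{b}t\mu+\tfrac{d}{2b}\mu^{2}\bigr)\), then collect the \(\tau\)-dependent terms: the \(t^{2}\)-part contributes the cross term \(\tfrac{a}{b}t\tau\), the linear part contributes \(-\tfrac{1}{b}\mu\tau\), and the pure \(\tau^{2}\)-part contributes \(-\tfrac{a}{2b}\tau^{2}\), which reassemble exactly into the displayed prefactor multiplying \(M_{\mu}^{(A)}T_{\tau}f(t)\). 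For (6) the plan is to write \(f_{t,-at/b}=M_{-at/b}T_{t}f\), substitute into \(L_{A}\), and change variables \(r=s-t\); expanding \((r+t)^{2}\) in the kernel produces a mixed term \(\tfrac{a}{b}rt\), while the modulation factor \(e^{-iat\,s/b}\) produces a term \(-\tfrac{at}{b}r\).

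I expect the main obstacle to lie precisely in this last step: one must check that the mixed term \(\tfrac{a}{b}rt\) cancels exactly against \(-\tfrac{at}{b}r\), so that the integrand collapses back to \(K_{A}(r,\mu)\) times a phase depending only on \(\mu\) and \(t\). Once this cancellation is confirmed, the surviving \((\mu,t)\)-terms factor out of the integral as the stated global-phase prefactor and the remaining integral is exactly \(L_{A}f(\mu)\), establishing (6). All other parts are routine one-variable changes of variables, so the only place where a sign or coefficient error could creep in is in tracking these quadratic cross terms in (5) and (6).
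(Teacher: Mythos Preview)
Your proposal is correct and matches what the paper does: the paper gives no proof at all for this lemma, merely prefacing it with ``Some basic properties of the transforms above can be easily checked,'' so the intended argument is exactly the direct substitution you outline. Your bookkeeping for the quadratic cross terms in (5) and (6) is accurate (in particular the cancellation \(i\tfrac{a}{b}rt - i\tfrac{at}{b}r = 0\) in (6) goes through as you anticipate), and the remaining items are standard one-line verifications.
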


As is mentioned in the introduction, phase retrieval problems aim at recovering signals from the phaseless measurements, which can also be stated as determining signals from their magnitudes up to a global phase. The linear canonical transform \(L_{A}\) has a more generalized form of the
well-known (fractional) Fourier transform and a wide range of engineering applications, such as optics and quantum mechanics. As mentioned in the introduction, there are already some conclusions about phase retrieval problems of the Fourier transform. By observing that
\begin{align*}
L_{A}f(\mu)=\frac{1}{\sqrt{b}}e^{i\frac{d\mu^{2}}{2b}}\mathcal{F}(\tilde{f})(\frac{\mu}{b}), 
\end{align*}
we can expect to get some similar results about phase retrieval problems of the linear canonical transforms. 
Recall that in \cite{Y}, signals can be determined by their own magnitudes and their magnitudes under some transforms up to a global constant and a conjugate operator. 
% Here, we consider the linear canonical phase retrieval problem of determining a function from the magnitudes of the linear canonical transforms. 

\begin{lem}(\cite{Y}, Theorem 1)
Let \(D\) be one of the operators: \(\frac{d}{dx}\), \(\gamma\) with \(q \in \mathbb{R}\) and \(|q| < 1\), or \(\delta\) with \(b \in \mathbb{R}\). Suppose that \(f\) and \(g\) are entire functions of genus \(\leq h\) such that
\[
|f(x)| = |g(x)| \quad \text{and} \quad |Df(x)| = |Dg(x)| \quad \forall x \in \mathbb{R}.
\]
for every real $x$. Then:
\begin{enumerate}
    \item [(a)] In the cases \(D = \frac{d}{dx}\), \(D = \gamma\) either \(f = cg\) or \(f = cg_*\) for some constant \(c \in \mathbb{C}\) with \(|c| = 1\).
    \item [(b)] In the case \(D = \delta\), either \(f = Wg\) or \(f = Wg_*\), where \(W\) is a meromorphic function with period \(b\) is continuous unimodular on the real line, and $g_*(x) = \overline{g(\bar{x})}$.
\end{enumerate}
\end{lem}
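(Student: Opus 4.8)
The ``if'' direction I would dispatch by direct verification: if $f=cg$ or $f=cg_*$ with $|c|=1$ then $|f|=|g|$ on $\mathbb{R}$, and, since each of $D=\frac{d}{dx},\gamma,\delta$ has real coefficients, $Df=cDg$ or $Df=c\,(Dg)_*$, so $|Df|=|Dg|$ on $\mathbb{R}$; and if $f=Wg$ (or $f=Wg_*$) with $W$ meromorphic, $b$--periodic and unimodular on $\mathbb{R}$, then $|f|=|g|$ on $\mathbb{R}$ while $b$--periodicity forces $\delta(Wg)=W\,\delta g$ (resp.\ $W\,(\delta g)_*$), so again $|\delta f|=|\delta g|$ on $\mathbb{R}$. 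For the converse, the first step is to promote the magnitude equalities to identities of entire functions. Writing $f_*(z):=\overline{f(\bar z)}$ --- again entire of genus $\le h$ --- and using $|f(x)|^2=f(x)f_*(x)$ for real $x$, the entire functions $ff_*$ and $gg_*$ agree on $\mathbb{R}$, hence $ff_*=gg_*$ on $\mathbb{C}$; since $\frac{d}{dx},\gamma,\delta$ each commute with $\ast$ (their coefficients being real) and $Df$ is entire in every case, the second hypothesis likewise gives $(Df)(Df)_*=(Dg)(Dg)_*$ on $\mathbb{C}$. Setting $W:=f/g$ (a meromorphic function; the case $g\equiv0$ being trivial), the first identity reads $W\,W_*=1$, i.e.\ $|W|=1$ on $\mathbb{R}$, and $f=Wg$, $f_*=W^{-1}g_*$.

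Next I would substitute $f=Wg$ and $f_*=W^{-1}g_*$ into $(Df)(Df)_*=(Dg)(Dg)_*$, using the Leibniz rule adapted to $D$ --- the usual product rule for $\frac{d}{dx}$, the rule $\gamma(uv)(x)=u(x)\,\gamma v(x)+v(qx)\,\gamma u(x)$ for $D=\gamma$, and $\delta(uv)(x)=u(x+b)\,\delta v(x)+v(x)\,\delta u(x)$ for $D=\delta$ --- together with the matching quotient rule for $W^{-1}$. In each case the resulting identity is divisible by $DW$, giving a dichotomy: either $DW\equiv0$, or one is left with a first--order relation for $W$ whose coefficients are built from $g$ and $g_*$ --- of the shape $W'=\bigl(\tfrac{g_*'}{g_*}-\tfrac{g'}{g}\bigr)W$ for $\frac{d}{dx}$, and a genuine first--order $q$--difference, resp.\ difference, equation for $\gamma$, resp.\ $\delta$, which prescribes $W(qx)/W(x)$, resp.\ $W(x+b)/W(x)$, as a fixed meromorphic function of $z$. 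One checks directly that $W=c\,g_*/g$ solves this first--order relation, and, the relation being first--order and homogeneous, its general meromorphic solution is $W=(g_*/g)\,W_0$ with $DW_0\equiv0$. Thus both branches reduce to classifying the homogeneous solutions $W_0$.

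That classification I would carry out as follows. For $D=\frac{d}{dx}$, $W_0$ is constant. For $D=\gamma$, $\gamma W_0\equiv0$ means $W_0(qx)=W_0(x)$; iterating and using $|q|<1$ (so $q^nx\to0$) together with the fact that $W_0$ is meromorphic, one concludes $W_0$ is constant --- this is the single place where the hypothesis $|q|<1$ is used. For $D=\delta$, $\delta W_0\equiv0$ says exactly that $W_0$ is $b$--periodic, a strictly larger class, which is precisely why conclusion (b) is weaker than (a). Putting this together with $|W|=1$ and $|g_*/g|=1$ on $\mathbb{R}$: in cases (a) one lands on $W=c$ or $W=c\,g_*/g$ with $|c|=1$, i.e.\ $f=cg$ or $f=cg_*$; in case (b) on $f=Wg$ or $f=W_0g_*$ with $W$, resp.\ $W_0$, meromorphic, $b$--periodic and unimodular on $\mathbb{R}$ --- which is the assertion. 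The finite--genus hypothesis enters behind the scenes through Hadamard's factorization theorem, which controls the zeros and growth of $f$ and $g$ (hence the poles, zeros and order of $W=f/g$) and so legitimises passing from the pointwise identities on $\mathbb{R}$ --- a priori valid only off the zero sets of $g,g_*,Dg,Dg_*$ --- to identities of meromorphic functions on all of $\mathbb{C}$.

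The differential case collapses to a one--line integration once the Riccati--type identity of the second step is written down, so the real difficulty lies elsewhere. I expect two points to be the main obstacle: (i) keeping track of the zeros of $g$ and $g_*$ (the poles of $W$) through the Leibniz and quotient expansions, so as to be certain that dividing by $DW$ and by $gg_*$ introduces no spurious solutions or cancellations; and (ii) the uniqueness theory for the first--order $q$--difference and difference equations that arise for $\gamma$ and $\delta$, namely showing that a meromorphic solution is genuinely $(g_*/g)$ times a $q$--invariant, resp.\ $b$--periodic, factor and nothing more. Item (ii), together with the Hadamard--factorization bookkeeping that the genus assumption supports, is where I would expect most of the effort to go.
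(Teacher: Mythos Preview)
The paper does not prove this statement at all: it is quoted verbatim as Theorem~1 of McDonald \cite{Y} and used as a black box in the proof of Proposition~2.4. There is therefore no ``paper's own proof'' to compare your proposal against. If you want to check your argument, you would have to consult McDonald's original article; within the present paper the lemma is simply an external citation.
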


Thus, we can consider the linear canonical phase retrieval problem of determining signals from the magnitudes of their linear canonical transforms with some multiplication operators. 

\begin{prop}
Let $\varphi$ be the Gaussian function, and let $\L_{A}$ be the linear canonical transform with the matrix $A$. Then any signal $f\in L^{2}(\mathbb{R})$ can be determined up to a global phase by $|L_{A}Pf|, |L_{A}QPf|, ~and~ |L_{A}(I+Q)P f|$, where $Qf(t)=tf(t)$, \(P(t)=\varphi(t)\). 

If we consider determining functions up to a global phase and the conjunction, i.e. either \(f = cg\) or \(f = cg_*\) holds with \(|c| = 1\), any signal $f\in L^{2}(\mathbb{R})$ can be determined by 
\begin{enumerate}
\item [(1)] $|L_{A}Pf|, |L_{A}P_{1}f| ~and~ |L_{A}P_{2}f|$,  Here, $P_{1}(t)=\sin(a\pi t)\varphi(t)$, $P_{2}f(t)=\sin(b\pi t)\varphi(t)$, $a,b>0$ are such that $\frac{a}{b}\not \in\mathbb{Q}$.
\item [(2)] \(|L_{A}Pf|, |L_{A}\mathbf{D_1}f|\),  where \(\mathbf{D_1}(t)=D_{a_1}\varphi(t)-a_1^{\frac{1}{2}}\varphi(t)\) with $D_a
f(t)=a^{-\frac{1}{2}}f(\frac{t}{a})$ denoting the dilation operator.
\end{enumerate}
% From (1), it follows that \(f = cg\) for some constant \(c \in \mathbb{C}\). For conditions (2) and (3), either \(f = cg\) or \(f = cg_*\) holds, where \(c \in \mathbb{C}\) is a constant with \(|c| = 1\).
\end{prop}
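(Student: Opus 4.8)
\emph{Proof proposal.} The engine behind all the claims is the factorisation
\[
L_{A}f(\mu)=\tfrac{1}{\sqrt{b}}\,e^{i d\mu^{2}/(2b)}\,\mathcal{F}\!\bigl(f\,e^{iat^{2}/(2b)}\bigr)\!\bigl(\tfrac{\mu}{b}\bigr)
\]
recorded in (2.1.3), which shows that the unimodular chirp prefactor and the dilation $\mu\mapsto\mu/b$ do not affect magnitudes. Writing $P$ for multiplication by the Gaussian $\varphi$ and setting $g_{0}:=\varphi\,f\,e^{iat^{2}/(2b)}$, the Gaussian decay places $g_{0}\in L^{1}(\mathbb{R})$ (Cauchy--Schwarz against $f\in L^{2}$) and lets $\widehat{g_{0}}=\mathcal{F}g_{0}$ extend to an entire function of order $\le 2$, hence of finite genus, so that Lemma 2.3 applies. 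Thus $|L_{A}Pf|$ is, up to the fixed factor $|b|^{-1/2}$ and the reparametrisation, just $|\widehat{g_{0}}|$ on $\mathbb{R}$, and the plan is to translate every auxiliary measurement into the magnitude of an operator $D\widehat{g_{0}}$ with $D\in\{\tfrac{d}{dx},\gamma,\delta\}$ as in Lemma 2.3. Let $h$ denote a second signal with the same measured magnitudes and set $h_{0}:=\varphi\,h\,e^{iat^{2}/(2b)}$.

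For the first assertion, note $QPf\,e^{iat^{2}/(2b)}=t\,g_{0}(t)$ and $(I+Q)Pf\,e^{iat^{2}/(2b)}=(1+t)g_{0}(t)$, so that, using $\mathcal{F}(t\,g_{0})=i(\mathcal{F}g_{0})'$,
\[
|L_{A}QPf|\propto|\widehat{g_{0}}\,'|,\qquad |L_{A}(I+Q)Pf|\propto|\widehat{g_{0}}+i\,\widehat{g_{0}}\,'|
\]
on the real line. Hence the first two measurements furnish $|\widehat{g_{0}}|=|\widehat{h_{0}}|$ and $|\widehat{g_{0}}\,'|=|\widehat{h_{0}}\,'|$, and Lemma 2.3(a) with $D=\tfrac{d}{dx}$ yields $\widehat{g_{0}}=c\,\widehat{h_{0}}$ or $\widehat{g_{0}}=c\,(\widehat{h_{0}})_{*}$ for some unimodular $c$. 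The role of the third measurement is to kill the conjugate branch: in the conjugate case one computes $|\widehat{g_{0}}+i\widehat{g_{0}}\,'|=|\widehat{h_{0}}-i\widehat{h_{0}}\,'|$ on $\mathbb{R}$, so the hypothesis $|\widehat{g_{0}}+i\widehat{g_{0}}\,'|=|\widehat{h_{0}}+i\widehat{h_{0}}\,'|$ forces $\mathrm{Im}\bigl(\widehat{h_{0}}\,\overline{\widehat{h_{0}}\,'}\bigr)\equiv 0$, i.e. $\widehat{h_{0}}$ equals a real function times a constant phase; but then $(\widehat{h_{0}})_{*}$ agrees with $\widehat{h_{0}}$ up to a phase and the conjugate solution collapses into the global-phase one. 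Dividing out the fixed non-vanishing factor $\varphi\,e^{iat^{2}/(2b)}$ returns $f\sim h$.

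For the two refinements, where only uniqueness up to global phase \emph{and} conjugation is sought, I drop the third measurement and instead realise the remaining operators of Lemma 2.3. In (1) the identity $\sin(a\pi t)=\tfrac{1}{2i}(e^{ia\pi t}-e^{-ia\pi t})$ turns $|L_{A}P_{1}f|$ into the magnitude of the finite difference $\xi\mapsto\widehat{g_{0}}(\xi-a\pi)-\widehat{g_{0}}(\xi+a\pi)$, a $\delta$-type operator; Lemma 2.3(b) then gives $\widehat{g_{0}}=W\,\widehat{h_{0}}$ (or the conjugate) with $W$ unimodular and periodic of period tied to $a$, and doing the same with $P_{2}$ yields a second such representation with period tied to $b$. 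The hypothesis $a/b\notin\mathbb{Q}$ makes these two periods incommensurable, so the common unimodular multiplier is simultaneously $a$- and $b$-periodic, hence constant, upgrading $W$ to a unimodular $c$ and giving $f=ch$ or $f=ch_{*}$ (the claimed $f=cg$ or $f=cg_{*}$). In (2) the combination $\mathbf{D_{1}}$, applying $D_{a_{1}}$ to $Pf$ and subtracting $a_{1}^{1/2}Pf$, is designed to realise the operator $\gamma$ (scaling by $q$ with $|q|<1$, with the subtracted copy removing the trivial term); Lemma 2.3(a) then applies directly from $|L_{A}Pf|$ and $|L_{A}\mathbf{D_{1}}f|$ alone, with no periodic ambiguity to resolve.

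The main obstacle I anticipate is concentrated in the reduction rather than in the appeals to Lemma 2.3. First, one must rigorously justify that $\widehat{g_{0}}$ is entire of finite genus with the growth needed as input to Lemma 2.3; this is exactly where the Gaussian window is indispensable, and the order-2 bound has to be checked against the admissible genus $h$. Second, and more delicate, is matching the abstractly defined operators $\gamma$ and $\delta$ of \cite{Y} to the concrete modulation $\sin(a\pi t)$ and the dilation combination $\mathbf{D_{1}}$ produced here: because the chirp $e^{iat^{2}/(2b)}$ does not commute cleanly with dilation, one has to verify that these land \emph{exactly} in the classes covered by Lemma 2.3 (correct step in the $\delta$ case, correct $|q|<1$ in the $\gamma$ case), since the uniqueness conclusion is sensitive to these parameters. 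By contrast, the conjugate-elimination computation in the first claim is routine once the magnitude identities above are in place.
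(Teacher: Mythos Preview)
Your reduction via (2.1.3) and the treatment of items (1) and (2) match the paper's argument essentially line for line: the sine masks yield difference operators, McDonald's theorem (Lemma 2.3(b)) gives periodic unimodular multipliers, and incommensurability forces them constant; the dilation mask produces the $\gamma$-type operator and Lemma 2.3(a) applies directly.

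The one genuine divergence is in the first assertion. The paper does \emph{not} invoke Lemma 2.3 there. Instead, from $|F|=|G|$, $|F'|=|G'|$, and $|F+iF'|=|G+iG'|$ it extracts both $|F|^2=|G|^2$ and $F'\overline{F}=G'\overline{G}$ (expanding $|F+iF'|^2=|F|^2+|F'|^2-2\,\mathrm{Im}(F'\overline{F})$ and using $\mathrm{Re}(F'\overline{F})=\tfrac12(|F|^2)'$), and then appeals to a separate lemma of Jaming--Rathmair (\cite{I}, Lemma 2.1) stating that an analytic function is determined up to a unimodular constant by the pair $(|F|^2,\,F'\overline{F})$. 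Your route instead applies Lemma 2.3(a) with $D=\tfrac{d}{dx}$ to the first two measurements to obtain the dichotomy $F=cG$ or $F=cG_*$, and uses the third measurement to kill the conjugate branch via $\mathrm{Im}(G'\overline{G})\equiv 0$. Your elimination is correct, but note that the step ``$\mathrm{Im}(G'\overline{G})\equiv 0$ forces $G$ to be a real function times a constant phase'' tacitly uses that an entire function real on an interval is real on all of $\mathbb{R}$ (Schwarz reflection), since a priori the argument of $G$ is only locally constant off its (isolated) real zeros. The paper's route is shorter and avoids this detour at the cost of importing a second external lemma; yours stays entirely within the McDonald framework already quoted.

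Your flagged concern about the chirp not commuting with dilation in item (2) is well taken: the paper's own computation silently treats $\mathbf{D_1}$ as the operator $D_{a_1}-a_1^{1/2}I$ acting on $\varphi\tilde f$ rather than as the pointwise multiplier the statement defines, so the bookkeeping there deserves the scrutiny you propose.
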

\begin{proof}
Firstly, by observing that $L_{A}f(\mu)=\frac{1}{\sqrt{b}}e^{i\frac{d\mu^{2}}{2b}}\mathcal{F}(\tilde{f})(\frac{\mu}{b}),$ we can gain that for all $\mu \in \R,$
\begin{align}
|L_{A}QPf(\mu)|=|L_{A}QPg(\mu)| &\iff |\mathcal{F}QP\tilde{f}(\mu)|=|\mathcal{F}QP\tilde{g}(\mu)| \label{pythagorean} \\
|L_{A}(I+Q)Pf|=|\L_{A}(I+Q)Pg| &\iff |\mathcal{F}(I+Q)P\tilde{f}(\mu)|=|\mathcal{F}(I+Q)P\tilde{g}(\mu)| \label{pythagorean} \\
|\mathcal{C}_{A}Pf(\mu)|=|\mathcal{C}_{A}Pg(\mu)| &\iff |F(\mu)|=|G(\mu)| \label{pythagorean} \\
|L_{A}P_{1}f(\mu)|=|L_{A}P_{1}g(\mu)| &\iff |\mathcal{F}P_{1}\tilde{f}|=|\mathcal{F}P_{1}\tilde{g}| \label{pythagorean} \\
|L_{A}\mathbf{D}f(\mu)|=|L_{A}\mathbf{D}g(\mu)| &\iff |\mathcal{F}\mathbf{D}\tilde{f}(\mu)|=|\mathcal{F}\mathbf{D}\tilde{g}(\mu)| \label{pythagorean}
\end{align}
where we denote that $F\equiv \mathcal{F}[\varphi\tilde{f}]$ and $G\equiv \mathcal{F}[\varphi \tilde{g}]$ with $\tilde{f}=fe^{\frac{ia}{2b}\cdot^{2}}$ and $\tilde{g}=ge^{\frac{ia}{2b}\cdot^{2}}.$ Note that $F$ and $G$ are two entire functions extended from the two analytic functions defined on the real line respectively. 

Based on the equalities above, we can prove the theorem gradually.
\begin{enumerate}
\item[(1)] Recall that $i\mathcal{D}\mathcal{F}=\mathcal{F}Q$, where $\mathcal{D}$ is the differential operator.   
Then $|\mathcal{F}QP\tilde{f}(\mu)|=|\mathcal{F}QP\tilde{g}(\mu)|$ implies that $|F'|=|G'|$.
$|\mathcal{F}(I+Q)P\tilde{f})(\mu)|=|\mathcal{F}(I+Q)P\tilde{g})(\mu)|$ implies that
\begin{equation*}
 |F+iF'|=|G+iG'|.
\end{equation*}

Therefore,
\begin{equation*}
  |F|=|G|, ~~~ |F'|=|G'|, ~~~ |F+iF'|=|G+iG'|.
\end{equation*}
Since $|F+iF'|^{2}=|F|^{2}+|F'|^{2}-2Im{F'\overline{F}}$ and $Re{F'\overline{F}}=\frac{1}{2}(|F|^{2})'$,
we get $|F|=|G|$ and $F'\overline{F}=G'\overline{G}$.
According to \cite{I} Lemma 2.1, every complex-valued analytic function on \( \mathbb{R}\) can be uniquely determined by $|F|^2$ and $F'\bar{F}$ up to a unimodular constant, which implies that $G=\lambda F$ with $|\lambda|=1$. 

\item[(2)] As $\mathcal{F}M_{v}=T_{v}\mathcal{F}$, $\sin\alpha=\frac{e^{i\alpha}-e^{-i\alpha}}{2i}$ and $P_{1}(t)=\sin(a\pi t)\varphi(t)$. 
With $(3.4),$ a straightforward calculation shows that
\begin{align*}
   &  |\mathcal{F}P_{1}\tilde{f}|=|\mathcal{F}\frac{e^{ia\pi t}-e^{-ia\pi t}}{2i}\tilde{f}|\\
   & =|\frac{1}{2i}(T_{\frac{a}{2}}-T_{\frac{-a}{2}})\mathcal{F}\tilde{f}|\\
   &=\frac{1}{2}|F(\mu-\frac{a}{2})-F(\mu+\frac{a}{2})|
\end{align*}
Then we have
\begin{equation*}
  |\mathcal{C}_{A}P_{1}f|=|\mathcal{C}_{A}P_{1}g|
\end{equation*}
if and only if
\begin{equation*}
  |F(\mu)-F(\mu-a)|=|G(\mu)-G(\mu-a)|.
\end{equation*}
Similarly, $|L\mathcal{C}_{A}P_{2}f|=|L_{A}P_{2}g|$ if and only if $|F(\mu)-F(\mu-b)|=|G(\mu)-G(\mu-b)|$.

Applying \cite{Y} Theorem 1 twice, we get that there exist two periodic functions $W_{a}$ and $W_{b}$ with respective periods $a$ and $b$, and $|W_{a}(x)|=|W_{b}(x)|=1$, such that $G=W_{a}F=W_{b}F$. Since $\frac{a}{b}\not \in\mathbb{Q}$, $\{ak+bl, k,l\in\mathbb{Z}\}$ is dense in $\mathbb{R}$. We get that $W_{a}$ is a constant of modulus one.
By the inverse transform, we have $f=\lambda g$, where $|\lambda|=1$. 

\item[(3)] Combining $(3.6)$ with the properties that $\mathbf{D}f(t)=D_{a}f(t)-a^{\frac{1}{2}}f(t)$ and $\mathcal{F}D_{a}=D_{\frac{1}{a}}\mathcal{F}$, a straightforward calculations shows that
\begin{align*}
& |\mathcal{F}[\mathbf{D}\tilde{f}]|=|\mathcal{F}[\mathbf{D}\tilde{f}(t)-a^{\frac{1}{2}}\tilde{f}(t)]|\\
   & =|D_{\frac{1}{a}}\mathcal{F}\tilde{f}-a^{\frac{1}{2}}\mathcal{F}\tilde{f}(t)|\\
   &=|a^{\frac{1}{2}}||F(a\mu)-F(\mu)|
\end{align*}
Then we have
\begin{equation*}
|\mathcal{C}_{A}\mathbf{D}f|=|\mathcal{C}_{A}\mathbf{D}g|
\end{equation*}
if and only if
\begin{equation*}
  |F(a\mu)-F(\mu)|=|G(a\mu)-G(\mu)|.
\end{equation*}

Applying the Theorem 1 in \cite{Y}, we get that $F=cG$, which yields the desired statement.
    
\end{enumerate}
\end{proof}

\subsection{Short-time linear canonical transform}\ 

In this subsection, we introduce the definition of the short-time linear canonical transform and some related preliminaries. 

\begin{defn}
For $b\neq0$, $A=(a,b,c,d)$, the short-time linear canonical transform (STLCT) of a function $f$ with respect to $g$
is defined by
\begin{align*}
V_{g}^{(A)}f(x, \mu) :
&= \frac{1}{\sqrt{b}} \int_{\mathbb{R}^{d'}} \overline{M_{\mu}^{(A)}T_x g(t)} f(t)dt \\
&= \int_{\mathbb{R}^{d'}} K_A(t, \mu) \overline{T_x g(t)} f(t)dt\\
&= L_{A}(f\overline{T_x g})(\mu) \quad (2.2.1)
\end{align*}
is a unitary operator on $L^{2}(\mathbb{R}^{d'})$.
\end{defn}

When \(A = (0, 1, -1, 0)\), the STLCT degenerates to the short time Fourier transform (STFT) of the function \(f\) with respect to the window function \(g\), denoted as:
\[
V_{g} f(x, \mu) := \int_{\mathbb{R}^{d'}} f(t) \overline{T_{x} g(t)} e^{-i t \mu}=\mathcal{F}(f\overline{T_{x} g})(\mu) \, dt \quad \text{for} \ x, \mu \in \mathbb{R}^{d'}. \quad (2.2.2)
\]

% (\cite{WLCT}, Remark 5)The following relation:
% \[
% \begin{array}{c}
% L_{\mathcal{A}}f(\mu)=\frac{1}{\sqrt{b}}e^{\frac{i d}{2b}\mu^2}\mathcal{F}(f_{A})(\frac{\mu}{b}) \\
% \text{FT} \quad \longrightarrow \quad \text{LCT} \\
% \begin{array}{c}
% (2.2.1) \ \ \downarrow \\
% \text{STFT}
% \end{array}
% \quad\longrightarrow\quad\quad
% \begin{array}{c}
% \downarrow \ \ (2.2.1) \\
% \text{STLCT}
% \end{array} \\
% V_g^{(A)}f(x,\mu)=\frac{1}{\sqrt{b}}e^{\frac{i d}{2b}\mu^2}V_g(f_{A})(x,\frac{1}{b}\mu)
% \end{array}
% \]

%\begin{lem}
% \begin{align*}

Similar to the communications of short-time Fourier transform and the multiplications, communicative relationships between the short-time linear canonical transform and the multiplications are established in \cite{WLCT} as the following lemma.

\begin{lem}(\cite{WLCT})For every $\mu, u\in \mathbb{R}^{d'}$ and every $f,g \in L^{2}(\mathbb{R}^{d'})$, the operators defined above satisfy the relations
\[L_{A_{1}}\left(M_{\mu}^{\left(A_{2}\right)} f\right)(u)=M_{\mu}^{(D)} T_{\left(b_{1} / b_{2}\right) \mu} L_{C}(f)(u),\]
where \(C=(a_{1}-\frac{a_{2}}{b_{2}} b_{1}, b_{1}, c_{1}-\frac{a_{2}}{b_{2}}d_{1}, d_{1})\) and \(D=\) \((0, \frac{b_{2}}{d_{1}}, -\frac{d_{1}}{b_{2}}, \frac{d_{2}}{d_{1}}+\frac{b_{1}}{b_{2}})\). Especially, when $A_{1}=A_{2}$, 
\[
L_{A}\left(M_{\mu}^{\left(A\right)} f\right)=M_{\mu}^{(D)} T_{\mu} L_{C}(f),
\]
where \(C=(0, b, c-\frac{a}{b}d, d)=(0, b, -\frac{1}{b}, d)\) and \(D=\) \((0, \frac{b}{d}, -\frac{d}{b}, 2)\). when $d=0$, $M_{\mu}^{(D)}=1$.
$A=C$ if and only if $a=0$.
\end{lem}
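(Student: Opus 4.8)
The plan is to prove the identity by a direct computation with the integral kernel, reducing everything to an algebraic identity among quadratic phases. Throughout write $A_i=(a_i,b_i,c_i,d_i)$ and recall from $(2.1.2)$ and $(2.1.1)$ that
\[
L_{A_1}\bigl(M_\mu^{(A_2)}f\bigr)(u)=\int_{\R^{d'}}\frac{1}{\sqrt{b_1}}\,e^{i\left(\frac{a_1}{2b_1}t^2-\frac{1}{b_1}tu+\frac{d_1}{2b_1}u^2\right)}\,e^{-i\left(\frac{a_2}{2b_2}t^2-\frac{1}{b_2}t\mu+\frac{d_2}{2b_2}\mu^2\right)}f(t)\,dt .
\]
First I would carry out the computation on the dense subspace $L^1(\R^{d'})\cap L^2(\R^{d'})$, where the integrals converge absolutely, and then extend to all of $L^2(\R^{d'})$ by density together with the fact that $L_{A_1}$, $M_\mu^{(A_2)}$, $M_\mu^{(D)}$ and $T_{(b_1/b_2)\mu}$ are all unitary, so both sides are continuous operators that agree on a dense set.

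The core step is to merge the two exponentials and sort the total phase by the monomials $t^2$, $t$, $u^2$, $u\mu$ and $\mu^2$. The coefficient of $t^2$ is $\frac{a_1}{2b_1}-\frac{a_2}{2b_2}=\frac{a_1b_2-a_2b_1}{2b_1b_2}$, which is exactly $\frac{a_C}{2b_C}$ for $b_C=b_1$ and $a_C=a_1-\frac{a_2}{b_2}b_1$; this fixes the first two entries of $C$. The linear-in-$t$ part is $-t\bigl(\frac{u}{b_1}-\frac{\mu}{b_2}\bigr)=-\frac{1}{b_1}t\bigl(u-\tfrac{b_1}{b_2}\mu\bigr)$, which is precisely the linear term of the kernel $K_C$ evaluated at the shifted output point $u-\frac{b_1}{b_2}\mu$. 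Recognizing this shift is the key algebraic maneuver: it lets me rewrite the integral as $L_C f$ evaluated at $u-\frac{b_1}{b_2}\mu$, i.e. as $T_{(b_1/b_2)\mu}L_C f(u)$, once I note that the normalizations agree because $b_C=b_1$ forces $\frac{1}{\sqrt{b_C}}=\frac{1}{\sqrt{b_1}}$, so no spurious constant appears. Matching the $u^2$ coefficient $\frac{d_1}{2b_1}=\frac{d_C}{2b_C}$ then forces $d_C=d_1$, and the last entry $c_C=c_1-\frac{a_2}{b_2}d_1$ is the unique value making $\det C=1$ (which holds automatically since $\det A_1=1$), as required for $L_C$ to be a well-defined unitary.

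What remains is to account for the phase not absorbed into $L_C f\bigl(u-\frac{b_1}{b_2}\mu\bigr)$. Expanding $\frac{d_1}{2b_1}\bigl(u-\frac{b_1}{b_2}\mu\bigr)^2$ inside $K_C$ produces, besides the correct $\frac{d_1}{2b_1}u^2$, the extra terms $-\frac{d_1}{b_2}u\mu+\frac{b_1 d_1}{2b_2^2}\mu^2$; subtracting these from the genuine $(u,\mu)$-phase of the left-hand side, namely $\frac{d_1}{2b_1}u^2-\frac{d_2}{2b_2}\mu^2$ with no $u\mu$ term, leaves a residual factor $e^{i\left(\frac{d_1}{b_2}u\mu-\bigl(\frac{d_2}{2b_2}+\frac{b_1d_1}{2b_2^2}\bigr)\mu^2\right)}$ depending only on $u$ and $\mu$. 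I would then verify, by comparison with $(2.1.1)$, that this is exactly $M_\mu^{(D)}(u)$ for $D=(0,\frac{b_2}{d_1},-\frac{d_1}{b_2},\frac{d_2}{d_1}+\frac{b_1}{b_2})$: indeed $a_D=0$ kills the $u^2$ term, $\frac{1}{b_D}=\frac{d_1}{b_2}$ reproduces the $u\mu$ coefficient, and $\frac{d_D}{2b_D}=\frac{d_2}{2b_2}+\frac{b_1d_1}{2b_2^2}$ reproduces the $\mu^2$ coefficient. This establishes the main identity.

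Finally I would deduce the special cases as bookkeeping. Setting $A_1=A_2=A$ gives $a_C=a-\frac{a}{b}b=0$ and, using $ad-bc=1$, $c_C=c-\frac{a}{b}d=\frac{ad-1}{b}-\frac{ad}{b}=-\frac1b$, so $C=(0,b,-\frac1b,d)$, while $T_{(b_1/b_2)\mu}=T_\mu$ and $D=(0,\frac bd,-\frac db,2)$; the claim $A=C\iff a=0$ then follows since $a=0$ forces $c=-\frac1b$ through $\det A=1$, and the degenerate case $d=0$ makes both residual phase coefficients vanish, whence $M_\mu^{(D)}=1$. The main obstacle is purely organizational: the several quadratic phases in $t,u,\mu$ must be tracked without sign or factor errors and regrouped so that the shift $u-\frac{b_1}{b_2}\mu$ is exposed; once the completion of the square is done correctly, the forms of $C$ and $D$ are forced.
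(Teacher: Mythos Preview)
Your direct kernel computation is correct and is exactly the standard way to verify such an identity: merge the two quadratic phases, regroup so the shift $u-\tfrac{b_1}{b_2}\mu$ is exposed in the linear-in-$t$ term, and read off $C$ and $D$ from what remains. In the present paper, however, this lemma is not proved at all; it is simply quoted from \cite{WLCT}, so there is no ``paper's own proof'' to compare against. Your write-up therefore supplies what the paper omits, and the bookkeeping for the special cases $A_1=A_2$, $d=0$, and $A=C\iff a=0$ is handled correctly via $\det A=1$.
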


\begin{defn}(\cite{L})
We define the convolution \( *_{A} \) of two functions \( f \) and \( g \) associated with the STLCT as
\[
\left( f *_{A} g \right)(t) = \frac{\overline{\lambda}_{A}(t)}{\sqrt{ b}} \left( f_{A} * g_{A} \right)(t),
\]
where \( \lambda_{A}(t) = \exp\left( \frac{i a t^{2}}{2 b} \right) \) is the chirp-modulation function, \( \overline{\lambda}_{A}(t) = \exp\left( \frac{-i a t^{2}}{2 b} \right) \), and \( \tilde{f}(t) = \lambda_{A}(t) f(t) = \exp\left( \frac{i a t^{2}}{2 b} \right) f(t) \).
\end{defn}

Similar to the convolution is taken to point-wise multiplication under the Fourier transform. We extend the result to the linear canonical transform.

\begin{lem}For \( f, g \in L^{2}(\mathbb{R}^{d'})\), 
under the linear canonical transform, the convolution is transformed into point-wise multiplication, and it is given by
\[
L_{A} (f *_A g) (\mu)=e^{-\frac{id}{2b}\mu^2} L_A f(\mu) L_A g(\mu).
\]
\end{lem}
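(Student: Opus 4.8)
The plan is to reduce the LCT convolution identity to the classical Fourier convolution theorem via the factorization $L_A f(\mu)=\frac{1}{\sqrt{b}}e^{i\frac{d\mu^2}{2b}}\mathcal{F}(\widetilde f)(\tfrac{\mu}{b})$ already recorded in $(2.1.3)$, where $\widetilde f(t)=e^{\frac{ia}{2b}t^2}f(t)=\lambda_A(t)f(t)$. First I would write out $f*_A g$ using Definition~2.10: since $(f*_A g)(t)=\frac{\overline{\lambda_A}(t)}{\sqrt b}(\widetilde f*\widetilde g)(t)$, the tilde of $f*_A g$ is exactly $\lambda_A(t)(f*_A g)(t)=\frac{1}{\sqrt b}(\widetilde f*\widetilde g)(t)$. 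Hence $\widetilde{(f*_A g)}=\frac{1}{\sqrt b}\,\widetilde f*\widetilde g$, which is the whole point of the chirp conjugation: it turns the $A$-convolution into an honest convolution of the tilde-transformed functions.

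Next I would apply $(2.1.3)$ to the left-hand side:
\[
L_A(f*_A g)(\mu)=\frac{1}{\sqrt b}e^{i\frac{d\mu^2}{2b}}\,\mathcal{F}\!\left(\widetilde{(f*_A g)}\right)\!\left(\tfrac{\mu}{b}\right)
=\frac{1}{\sqrt b}e^{i\frac{d\mu^2}{2b}}\cdot\frac{1}{\sqrt b}\,\mathcal{F}(\widetilde f*\widetilde g)\!\left(\tfrac{\mu}{b}\right).
\]
Now invoke the classical Fourier convolution theorem $\mathcal{F}(\widetilde f*\widetilde g)=\mathcal{F}(\widetilde f)\,\mathcal{F}(\widetilde g)$ (with the normalization of $\mathcal F$ fixed in the excerpt), evaluated at $\mu/b$, so the right side becomes $\frac{1}{b}e^{i\frac{d\mu^2}{2b}}\mathcal{F}(\widetilde f)(\tfrac{\mu}{b})\,\mathcal{F}(\widetilde g)(\tfrac{\mu}{b})$. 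Finally I would substitute back $\mathcal{F}(\widetilde f)(\tfrac{\mu}{b})=\sqrt b\,e^{-i\frac{d\mu^2}{2b}}L_A f(\mu)$ and likewise for $g$; multiplying these two expressions produces $b\,e^{-i\frac{d\mu^2}{b}}L_Af(\mu)L_Ag(\mu)$, and combining with the prefactor $\frac{1}{b}e^{i\frac{d\mu^2}{2b}}$ yields precisely $e^{-i\frac{d\mu^2}{2b}}L_Af(\mu)L_Ag(\mu)$, as claimed.

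The computation is essentially bookkeeping, so I do not expect a genuine obstacle, but there are two points demanding care. The first is keeping all the $\frac{1}{\sqrt b}$ and chirp factors consistent: the definition of $*_A$ already carries a $\frac{1}{\sqrt b}$, $(2.1.3)$ carries another, and one must be scrupulous that the Fourier normalization used in $(2.1.3)$ matches the one used in the convolution theorem (with $\mathcal{F}(f)(\mu)=\int f(t)e^{-i\mu t}\,dt$ the ordinary convolution theorem holds without extra constants, which is consistent here). The second is an integrability remark: for $f,g\in L^2(\mathbb{R}^{d'})$ the convolution $\widetilde f*\widetilde g$ need not lie in $L^1$, so the identity should be read in the sense in which the LCT and the products are defined (e.g.\ by density from $L^1\cap L^2$, or interpreting both sides as tempered distributions / via the unitary extension of $L_A$), exactly as the Fourier case is handled in the excerpt just before Definition~2.4. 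I would state this caveat in one sentence and otherwise present the chain of equalities above.
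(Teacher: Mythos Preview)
Your argument is correct. You reduce the LCT identity to the classical Fourier convolution theorem via the chirp factorization $(2.1.3)$ and the observation $\widetilde{(f*_A g)}=\tfrac{1}{\sqrt b}\,\widetilde f*\widetilde g$; the bookkeeping of the $\tfrac{1}{\sqrt b}$ and chirp factors is carried out correctly and the final collapse to $e^{-\frac{id}{2b}\mu^{2}}L_Af\,L_Ag$ is right.

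The paper takes a different route: it simply writes out $L_A(f*_A g)(\mu)$ as a double integral, inserts the definition of $*_A$, makes the substitution $v=t-s$, and recognizes the two resulting integrals as $L_Af(\mu)$ and $L_Ag(\mu)$ separately. In other words, the paper effectively re-derives the Fourier convolution theorem in-line with the chirp factors carried along, whereas you package that step by invoking $(2.1.3)$ and the known identity $\mathcal F(\widetilde f*\widetilde g)=\mathcal F(\widetilde f)\,\mathcal F(\widetilde g)$. Your approach is a bit more structural and explains why $*_A$ is defined as it is; the paper's is more self-contained. Your integrability caveat (handling the $L^{2}$ case by density from $L^{1}\cap L^{2}$) is also appropriate and in fact more careful than the paper, which does not comment on the interchange of integrals.
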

\begin{proof}The proof is a straight-forward computation
\begin{align*}
L_{A} (f *_A g) (\mu) &= \frac{1}{ b} \int_{\mathbb{R}} e^{i(\frac{a}{2b}t^2-\frac{1}{b}t \mu + \frac{d}{2b} \mu^2)} e^{-\frac{ia t^2}{2b}} \int_{\mathbb{R}} f(s) e^{\frac{ia}{2b}s^2} g(t-s) e^{\frac{ia}{2b}(t-s)^2} ds dt \\
&= \frac{1}{ b} \int_{\mathbb{R}} f(s) \int_{\mathbb{R}} e^{i (\frac{a}{2b}s^2 - \frac{1}{b} t \mu + \frac{d}{2b} \mu^2)} e^{\frac{ia}{2b}(t-s)^2} g(t-s) dt ds \\
&\stackrel{\text{v=t-s}}{=} \frac{1}{b} \int_{\mathbb{R}} f(s) e^{\frac{ia}{2b}s^2 - \frac{i}{b} s \mu} \int_{\mathbb{R}} e^{i (\frac{a}{2b} v^2 - \frac{1}{b} v \mu + \frac{d}{2b} \mu^2)} g(v) dv ds \\
&= \frac{1}{\sqrt{b}} e^{- \frac{id}{2b} \mu^2} L_A (g)(\mu)  \int_{\mathbb{R}} e^{i (\frac{a}{2b} s^2 - \frac{1}{b} s \mu + \frac{d}{2b} \mu^2)} f(s) ds  \\
&= e^{-\frac{id}{2b}\mu^2} L_A f(\mu) L_A g(\mu).
\end{align*}
\end{proof}

\section{Phase retrieval of short-time linear canonical transform}\ 

In this section, we investigate the recovery of square-integrable signals from discrete equidistant samples of their Gabor linear canonical transform (LCT) magnitudes. 
% We will present counterexamples in the next section to demonstrate that, in general, such samples do not suffice to uniquely recover the underlying signal up to a global phase factor. 
We focus primarily on the phase retrieval problem for the short-time linear canonical transform (STLCT) when rectangular square-root lattices serve as sampling sets. 
Before presenting our main results, we first introduce some relevant lemmas, which underpins the subsequent exploration.

Recall that if \(m, n \in \mathbb{R}_{>0}^{d'}\), then 
\( \mathcal{O}_{m}^{n}(\mathbb{C}^{d'})\) denotes the collection of all entire functions \(F \in  \mathcal{O}(\mathbb{C}^{d'})\) which satisfy the estimate 
\[
|F(x + i y)| \lesssim \prod_{j=1}^{d} e^{-m_{j} x_{j}^{2}} e^{n_{j} y_{j}^{2}}, \ x, y \in \mathbb{R}^{d'}.
\]
The function class \( \mathcal{O}_{m}^{n}(\mathbb{C}^{d'})\) is a linear subspace of \( \mathcal{O}(\mathbb{C}^{d'}) \cap L^{2}(\mathbb{R}^{d'})\) (the space of all entire functions which are square-integrable on \(\mathbb{R}^{d'}\) ), and if \(m < n\) (i.e., \(m_{j}<n_{j}\) for every \(j \in\{1,\cdots,d'\}\) ), then \( \mathcal{O}_{m}^{n}(\mathbb{C}^{d'})\) is dense in \(L^{2}(\mathbb{R}^{d'})\). Note that we will repeatedly identify entire functions with their restriction on \(\mathbb{R}^{d'}\), and vice versa, functions on \(\mathbb{R}^{d'}\) with their analytic extension on \(\mathbb{C}^{d'}\) (provided the extension exists).  Window functions which belong to the function space \( \mathcal{O}_{m}^{n}(\mathbb{C}^{d'})\) lead to phaseless sampling results from square-root lattices. 

% In this section, we make a assertion on STLCT phase retrieval with windows in \( \mathcal{O}_{m}^{n}(\mathbb{C}^{d})\). This analysis is made under the assumption that complete spectrograms are available.

% Recall that if \(m, n \in \mathbb{R}_{>0}^{d}\), then \( \mathcal{O}_{m}^{n}(\mathbb{C}^{d})\) denotes the collection of all entire functions \(F \in  \mathcal{O}(\mathbb{C}^{d})\) which satisfy the estimate 
% \[
% |F(x + i y)| \lesssim \prod_{j=1}^{d} e^{-m_{j} x_{j}^{2}} e^{n_{j} y_{j}^{2}}, \ x, y \in \mathbb{R}^{d}.
% \]
% The function class \( \mathcal{O}_{m}^{n}(\mathbb{C}^{d})\) is a linear subspace of \( \mathcal{O}(\mathbb{C}^{d}) \cap L^{2}(\mathbb{R}^{d})\) (the space of all entire functions which are square-integrable on \(\mathbb{R}^{d}\) ), and if \(m < n\) (i.e., \(m_{j}<n_{j}\) for every \(j \in\{1,\cdots,d\}\) ), then \( \mathcal{O}_{m}^{n}(\mathbb{C}^{d})\) is dense in \(L^{2}(\mathbb{R}^{d})\). Note that we will repeatedly identify entire functions with their restriction on \(\mathbb{R}^{d}\), and vice versa, functions on \(\mathbb{R}^{d}\) with their analytic extension on \(\mathbb{C}^{d}\) (provided the extension exists).  Window functions which belong to the function space \( \mathcal{O}_{m}^{n}(\mathbb{C}^{d})\) lead to phaseless sampling results from square-root lattices. 

Just as the Fourier transform rotates the time-frequency plane in the STFT, we extend this result to the short-time linear canonical transform (STLCT). Our findings demonstrate that even under more complex operators, a close connection exists between the time-frequency localization analysis of a signal and its spectral properties, this connection serves as a natural generalization of the time-frequency rotational relationship in the STFT.
% It provides a theoretical basis for understanding and processing time-frequency information in more generalized mathematical models and practical applications (such as in generalized signal processing scenarios). 

\begin{prop} \label{formula}
For \(f, g \in L^{2}(\mathbb{R}^{d'})\), the following holds;
\[
V_{g}^{(A)}f(x, \mu)=e^{\frac{i\mu (d\mu-x)}{b}}V^{(B)}_{L_{C}(g)}L_{A}(f)(\mu, d\mu-x)
\]
where \(A=(a, b, c, d)\), \(B=(-d, b, -\frac{1}{b}, 0)\), \(C=(0, b, -\frac{1}{b}, d)\). Especially, when
\(A=(0, 1, -1, 0)\), this yields the Fundamental Identity of Time-Frequency Analysis
\[
V_{g}f(x, \mu)=e^{-i\mu x}V_{\widehat{g}}(\widehat{f})(\mu, -x)
\]
\end{prop}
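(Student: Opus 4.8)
The plan is to prove the identity by reducing it, via the definitions already established in the excerpt, to the known Fundamental Identity of Time-Frequency Analysis for the STFT, together with the commutation relation between the LCT and the generalized modulation operator (Lemma with $L_{A_1}(M_\mu^{(A_2)}f)$). First I would unwind the left-hand side using Definition 2.2.1: $V_g^{(A)}f(x,\mu) = L_A(f\cdot\overline{T_x g})(\mu)$. The idea is to recognize $L_A(f\cdot\overline{T_x g})$ as essentially a windowed Fourier transform after the chirp-multiplication hidden in $L_A$; indeed formula $(2.1.3)$ says $L_A h(\mu) = \tfrac{1}{\sqrt b}e^{id\mu^2/(2b)}\mathcal F(h\, e^{iat^2/(2b)})(\mu/b)$, so I would write $V_g^{(A)}f(x,\mu)$ in terms of $\mathcal F\big((f e^{iat^2/(2b)})\cdot\overline{T_x g}\big)(\mu/b)$ and then apply the classical identity $V_g f(x,\mu) = e^{-i\mu x}V_{\widehat g}(\widehat f)(\mu,-x)$ to this STFT.

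Next I would handle the chirp factor. The window on the STFT side is $T_x g$ while the ``function'' side carries the extra chirp $e^{iat^2/(2b)}$; the cleanest route is to absorb $e^{iat^2/(2b)}$ into a generalized modulation on $f$ (using $M_\mu^{(A)}$ of Definition 2.1.1 with the translation/modulation identities in Lemma parts (5)-(6)), so that $L_A(f\,\overline{T_xg})$ becomes a genuine STLCT-type object to which the commutation lemma for $L_{A_1}(M_\mu^{(A_2)}f)$ applies with the parameter matrices $A$, $B=(-d,b,-\tfrac1b,0)$, $C=(0,b,-\tfrac1b,d)$ appearing in the statement. The point of the specific $B$ and $C$ is exactly that $C$ is the matrix obtained from $A$ by killing the $a$-entry (consistent with ``$A=C$ iff $a=0$'' in the earlier lemma) so that $L_C(g)$ is the LCT-analogue of $\widehat g$, and $B$ is the conjugate matrix accounting for the swap of the roles of $\mu$ and $d\mu - x$. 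I would then match phases: the prefactor $e^{i\mu(d\mu-x)/b}$ should come out precisely from combining the $e^{id\mu^2/(2b)}$ factors in $(2.1.3)$, the $e^{-i\mu x}$ from the classical identity, and the leftover chirp terms from the modulation commutation.

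The main obstacle I anticipate is bookkeeping of the chirp phases and the parameter matrices: one must verify that after substituting $(2.1.3)$ for $L_A$ on the left and for $L_B$, $L_C$ on the right, all the Gaussian/chirp exponentials and all scalings by $b$ collapse to exactly the stated kernel and exactly the claimed argument $(\mu, d\mu-x)$, with no residual factors. Concretely, I would (i) expand both sides fully in terms of ordinary Fourier transforms using $(2.1.3)$, (ii) apply the STFT Fundamental Identity to the left side, (iii) on the right side expand $V_{L_C(g)}^{(B)}$ via its definition and $(2.1.3)$ as well, and (iv) compare the resulting integrands pointwise, checking that the quadratic-in-$\mu$, linear-in-$\mu x$, and $x$-independent chirp terms all agree. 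The degenerate cases $d=0$ (where $B$ has a zero $d$-entry, so the $b=0$-branch conventions or the ``$M_\mu^{(D)}=1$'' remark become relevant) and the specialization $A=(0,1,-1,0)$ should be recorded as consistency checks; the latter must reproduce $V_g f(x,\mu)=e^{-i\mu x}V_{\widehat f}(\widehat g)(\mu,-x)$ — note that with $A=(0,1,-1,0)$ one has $B=(0,1,-1,0)$ and $C=(0,1,-1,0)$, so $L_C=\mathcal F$ and the prefactor $e^{i\mu(d\mu-x)/b}$ becomes $e^{-i\mu x}$, confirming the formula.
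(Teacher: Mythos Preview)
Your plan is workable but follows a different route from the paper. You propose to unwind both sides via formula $(2.1.3)$ into ordinary Fourier integrals, invoke the classical STFT identity $V_g f(x,\omega)=e^{-ix\omega}V_{\hat g}\hat f(\omega,-x)$ on the left, expand the right-hand side through the same chirp-Fourier factorization for $L_B$ and $L_C$, and then match exponents term by term. The paper never passes through the classical STFT at all: it writes $V_g^{(A)}f(x,\mu)=\tfrac1{\sqrt b}\langle f,\,M_\mu^{(A)}T_x g\rangle$, uses the commutation rule of Lemma~2.2(5) to swap $M_\mu^{(A)}$ and $T_x$, then applies the \emph{unitarity of $L_A$} to replace $\langle f,\cdot\rangle$ by $\langle L_Af,\,L_A(\cdot)\rangle$, and finally uses Lemma~2.2(6) together with Lemma~2.6 (the identity $L_A M_\mu^{(A)}=M_\mu^{(D)}T_\mu L_C$) to reduce the second slot to $M^{(B)}_{d\mu-x}T_\mu L_C(g)$, which is by definition the kernel of $V^{(B)}_{L_C g}$ evaluated at $(\mu,d\mu-x)$.

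Your direct-expansion approach buys an explicit, self-contained verification that does not rely on the intertwining lemmas and makes the specialization $A=(0,1,-1,0)$ immediate; the paper's inner-product approach is shorter and more structural, and it \emph{explains} where $B$ and $C$ come from rather than merely checking them (namely, $C$ is produced by Lemma~2.6 and $B$ is the unique parameter for which the residual phase equals $M^{(B)}_{d\mu-x}$). The phase bookkeeping you correctly flag as the main obstacle is genuine in your route---every chirp in $L_A$, $L_B$, $L_C$ has to be tracked by hand---whereas in the paper's argument that bookkeeping is absorbed into the commutation lemmas, leaving only a few scalar prefactors to collect at the end.
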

\begin{proof}
The proof is a straight-forward computation and the fact \(L_{A}\) is a unitary operator on 
\(L^{2}(\mathbb{R}^{d'})\).
\begin{align*}
V_{g}^{(A)}f(x, \mu)
&= \frac{1}{\sqrt{ b}}\langle f, M_{\mu}^{(A)}T_x g \rangle \\
&\stackrel{\text{(5)}}{=}\frac{1}{\sqrt{ b}} \langle f, e^{\frac{1}{b}i\mu x-\frac{ia}{b}tx+\frac{ia}{2b}x^{2}}T_x M_{\mu}^{(A)}g \rangle \\
&=\frac{1}{\sqrt{ b}}e^{-\frac{1}{b}i\mu x-\frac{ia}{2b}x^{2}}\langle f, M_{-\frac{ax}{b}}T_x M_{\mu}^{(A)}g \rangle \\
&= \frac{1}{\sqrt{ b}}e^{-\frac{1}{b}i\mu x-\frac{ia}{2b}x^{2}} \langle L_{A}(f), L_{A}M_{-\frac{ax}{b}}T_x M_{\mu}^{(A)}g \rangle \\
&\stackrel{\text{(6)}}{=}\frac{1}{\sqrt{ b}}e^{-\frac{1}{b}i\mu x-\frac{ia}{2b}x^{2}} \langle L_{A}(f), e^{\frac{di}{2b}u^{2}-\frac{iux}{b}-\frac{ia}{2b}x^{2}}L_{A}M_{\mu}^{(A)}g \rangle \\
&=\frac{1}{\sqrt{ b}}e^{-\frac{i\mu x}{b}}\langle L_{A}(f), e^{\frac{di}{2b}u^{2}-\frac{iux}{b}}L_{A}M_{\mu}^{(A)}g \rangle\\
&\stackrel{\text{(lem 2.6)}}{=}\frac{1}{\sqrt{ b}}e^{-\frac{i\mu x}{b}}\langle L_{A}(f), e^{\frac{di}{2b}u^{2}-\frac{iux}{b}}M_{\mu}^{(D)}T_{\mu}L_{C}(g) \rangle\\
&=\frac{1}{\sqrt{ b}}e^{-\frac{i\mu x}{b}}\langle L_{A}(f), e^{\frac{di}{2b}u^{2}-\frac{ixu}{b}}e^{i(\frac{d}{b}u \mu-\frac{d}{b}\mu^{2})}T_{\mu}L_{C}(g) \rangle\\
&=\frac{1}{\sqrt{ b}}e^{-\frac{i\mu x}{b}}e^{\frac{id}{b}\mu^{2}}\langle L_{A}(f), M_{d\mu-x}^{(B)}T_{\mu}L_{C}(g) \rangle\\
&=e^{\frac{i\mu (d\mu-x)}{b}}V^{(B)}_{L_{C}(g)}L_{A}(f)(\mu, d\mu-x),
\end{align*}
The third equation is due to the fact that
$M_{\mu}^{(D)}f(u)=e^{-i(0-\frac{d}{b}u \mu+\frac{d}{b}\mu^{2})}f(u)=e^{i(\frac{d}{b}u \mu-\frac{d}{b}\mu^{2})}f(u)$,
where $B=(-d, b, -\frac{1}{b}, 0)$, \(C=(0, b, -\frac{1}{b}, d)\).
\end{proof}

For the main result of this section, we make a assertion on STLCT phase retrieval with windows in $\mathcal{O}_{m}^{n}(\mathbb{C}^{d'}).$ This analysis is made under the assumption that complete spectrograms are available.

\begin{prop}
Let \(m , n \in \mathbb{R}_{>0}^{d'}\), and let \(0 \neq \varphi \in  \mathcal{O}_{m}^{n}(\mathbb{C}^{d'}) \) be a window function. If \(f, h \in L^{2}(\mathbb{R}^{d'})\) are such that \(|V^{(A)}_{\varphi} f(x, \mu)| = |V^{(A)}_{\varphi} h(x, \mu)|\) for every \((x, \mu) \in \mathbb{R}^{2d'}\), then \(f \sim h\).
\end{prop}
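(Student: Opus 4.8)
The plan is to reduce the STLCT phase retrieval problem to the already-known STFT phase retrieval result with a window in the Gaussian-type class $\mathcal{O}_{m}^{n}(\mathbb{C}^{d'})$. The bridge is Proposition \ref{formula}: for $f,h\in L^2(\mathbb{R}^{d'})$ one has
\[
V^{(A)}_{\varphi}f(x,\mu)=e^{\frac{i\mu(d\mu-x)}{b}}\,V^{(B)}_{L_{C}(\varphi)}L_{A}(f)(\mu,d\mu-x),
\]
and an identical formula with $h$ in place of $f$. Since the exponential prefactor is unimodular and depends only on $(x,\mu)$ (not on $f$ or $h$), the hypothesis $|V^{(A)}_{\varphi}f(x,\mu)|=|V^{(A)}_{\varphi}h(x,\mu)|$ for all $(x,\mu)\in\mathbb{R}^{2d'}$ is equivalent to
\[
\bigl|V^{(B)}_{L_{C}(\varphi)}(L_{A}f)(y,z)\bigr|=\bigl|V^{(B)}_{L_{C}(\varphi)}(L_{A}h)(y,z)\bigr|
\]
for all $(y,z)\in\mathbb{R}^{2d'}$, after the invertible change of variables $(x,\mu)\mapsto(y,z)=(\mu,d\mu-x)$, whose image is all of $\mathbb{R}^{2d'}$.

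Next I would unwind the window $V^{(B)}$ into an ordinary STFT. By definition $V^{(B)}_{G}F(y,z)=L_{B}(F\,\overline{T_{y}G})(z)$, and the identity $(2.1.3)$ expresses $L_{B}$ as a chirp multiplication composed with a Fourier transform and a dilation; since $B=(-d,b,-\tfrac1b,0)$, the relevant computation shows $|L_{B}(\cdot)(z)|$ equals a dilated modulus of an ordinary Fourier transform of a chirped function. Carrying the chirp factor $e^{\frac{i\,b_{1}}{2b_{1}}(\cdot)^2}$-type term (here $a/2b$ with the $B$-parameters) onto the window, one rewrites the magnitude $|V^{(B)}_{L_{C}(\varphi)}F|$ as $|V_{\psi}\widetilde{F}|$ up to a fixed positive dilation and a fixed unimodular factor, where $\psi$ is $L_{C}(\varphi)$ multiplied by a Gaussian chirp and $\widetilde{F}=F$ multiplied by the same chirp. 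The key structural point is that multiplying an element of $\mathcal{O}_{m}^{n}(\mathbb{C}^{d'})$ — or its image under the unitary chirp-Fourier maps $L_{C}$, $L_{A}$ — by a Gaussian of the form $e^{\pm i\gamma t^2}$ keeps the resulting function in a class of the same type $\mathcal{O}_{m'}^{n'}(\mathbb{C}^{d'})$ for suitable $m',n'>0$; this is because such a multiplication merely completes the square in the Gaussian exponent and shifts the real/imaginary parts of the quadratic form, which only affects the constants $m_j,n_j$. Hence $\psi\in\mathcal{O}_{m'}^{n'}(\mathbb{C}^{d'})$ is again a nonzero admissible window of Gaussian-type, and $\widetilde{L_{A}f},\widetilde{L_{A}h}\in L^2(\mathbb{R}^{d'})$.

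At this stage the hypothesis has become $|V_{\psi}u|=|V_{\psi}v|$ on all of $\mathbb{R}^{2d'}$ with $\psi\in\mathcal{O}_{m'}^{n'}(\mathbb{C}^{d'})$ and $u=\widetilde{L_{A}f}$, $v=\widetilde{L_{A}h}$. Invoking the continuous STFT phase retrieval theorem for Gaussian-type windows (the STFT analogue underlying the sampling result cited from \cite{FCM}), we conclude $u\sim v$, i.e.\ $u=e^{i\alpha}v$ for some $\alpha\in\mathbb{R}$. Undoing the chirp multiplication (which is pointwise unimodular, hence preserves $\sim$) gives $L_{A}f=e^{i\alpha}L_{A}h$, and since $L_{A}$ is a unitary operator on $L^2(\mathbb{R}^{d'})$ and commutes with scalars, $f=e^{i\alpha}h$, that is $f\sim h$. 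The main obstacle I anticipate is the bookkeeping in the second step: verifying carefully that the chirp factors produced by writing $L_{B}$ and $L_{C}$ in Fourier form can be absorbed symmetrically into window and signal, and that the resulting window genuinely lands back in some $\mathcal{O}_{m'}^{n'}(\mathbb{C}^{d'})$ with both parameters strictly positive (one must check the sign of the real part of the combined quadratic exponent so that the Gaussian decay $e^{-m'_j x_j^2}$ is preserved rather than destroyed). Once that is in place, the reduction to the known STFT statement is immediate.
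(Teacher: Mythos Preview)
Your reduction strategy is valid and genuinely different from the paper's argument. The paper does not reduce to a known STFT statement; instead it unfolds $|V_\varphi^{(A)}f(x,\mu)|^2$ directly as an ambiguity-type expression $\langle F^A_s,\,T_\mu \Phi^C_s\rangle$ with $F^A_s=L_Af\cdot T_s\overline{L_Af}$ and $\Phi^C_s=L_C\varphi\cdot T_s\overline{L_C\varphi}$, shows that equality of spectrograms forces $(F^A_s-H^A_s)\ast \mathcal{R}\Phi^C_s\equiv 0$, takes the Fourier transform, and uses that $\mathcal{F}(\mathcal{R}\Phi^C_s)$ extends to a nontrivial entire function (because $\varphi\in\mathcal{O}_m^n$) whose real zero set is null. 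This yields $L_Af\cdot T_s\overline{L_Af}=L_Ah\cdot T_s\overline{L_Ah}$ for every $s$, hence $f\sim h$. Your route is shorter provided one is willing to quote the STFT case from \cite{FCM}; the paper's route is self-contained and develops exactly the entire-function machinery that is reused in the sampling theorem, so it dovetails better with the rest of Section~3.

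One point in your bookkeeping needs a sharper justification than what you wrote. Multiplying by a real-parameter chirp $e^{i\gamma t^2}$ does \emph{not} merely ``complete the square'' or shift the constants: for $t=x+iy$ one has $|e^{i\gamma(x+iy)^2}|=e^{-2\gamma xy}$, which introduces a genuine cross term rather than a diagonal change in the exponent. Your conclusion that $L_C\varphi$ (and any further chirped version) still lies in some $\mathcal{O}_{m'}^{n'}$ with $m',n'>0$ is nevertheless correct, but the honest reason is Young's inequality $|2\gamma xy|\le \varepsilon x^2+\gamma^2\varepsilon^{-1}y^2$ with $\varepsilon>0$ small enough to keep the coefficient of $-x^2$ strictly positive. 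Once that is stated, the reduction to $|V_\psi u|=|V_\psi v|$ on $\mathbb{R}^{2d'}$ with $\psi\in\mathcal{O}_{m'}^{n'}$ and $u,v\in L^2$ is clean, the cited STFT result applies, and unitarity of $L_A$ plus unimodularity of the chirp give $f\sim h$.
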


This proposition can be proved by using the formula in Proposition \ref{formula}. Since we aim to consider the phase retrieval problem sampling by rectangular square-root lattices, we will not state the complete proof of this proposition until Section 4. In order to lead to the result, we need several lemmas about $\mathcal{O}_{m}^{n}(\mathbb{C}^{d'})$ and its relationship with the short-time linear canonical transform.

% \begin{rem}
% In fact, the above conclusion holds in higher-dimensional cases as well. Moreover, \(\varphi\) is not restricted to Gaussian functions; the conclusion applies to all \(g\) that belong to 
% \(O_{a}^{b}(\mathbb{C}^{d})\):

% Let \(a , b \in \mathbb{R}_{>0}^{d}\), and let \(0 \neq \varphi \in O_{a}^{b}(\mathbb{C}^{d})\). If \(f, h \in L^{2}(\mathbb{R}^{d})\) are such that \(|V^{(A)}_{\varphi} f(x, \mu)| = |V^{(A)}_{\varphi} h(x, \mu)|\) for every \((x, \mu) \in \mathbb{R}^{2d}\), then \(f \sim h\).
% \end{rem}

Before we proceed to the main theorem, we show the following uniqueness statement.
\begin{lem}(\cite{FCM} Proposition 2.5) 
Let \(n = (n_{1}, \ldots, n_{d'}) \in \mathbb{R}_{>0}^{d'}\). For each \(j \in \{1, \ldots, d'\}\), let  
\[
\Lambda_{j} := \left\{ \pm \lambda_{j}(k) : k \in \mathbb{N}_{0} \right\} \subseteq \mathbb{R},
\]  
where \(\lambda_{j}: \mathbb{N}_{0} \to \mathbb{R}_{>0}\) is an increasing function.  

If  
\[
\liminf_{k \to \infty} \frac{\lambda_{j}(k)}{\sqrt{k}} < \frac{1}{\sqrt{n_{j} e}}
\]  
for every \(j \in \{1, \ldots, \mathbb{d}\}\), then \(\Lambda := \Lambda_{1} \times \cdots \times \Lambda_{\mathbb{d}}\) is a uniqueness set for \(O^{n}(\mathbb{C}^{\mathbb{d}})\).  
\end{lem}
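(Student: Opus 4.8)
Recall that $\Lambda$ being a uniqueness set for $O^{n}(\mathbb{C}^{d'})$ means that the only $F \in O^{n}(\mathbb{C}^{d'})$ vanishing on $\Lambda$ is $F \equiv 0$. The plan is to reduce this to the one-variable case and then to prove the latter by pairing Jensen's formula with a \emph{sharp} estimate for the zero-counting function of functions in $O^{n}(\mathbb{C})$. For the reduction I would peel off one coordinate at a time: if $0 \neq F \in O^{n}(\mathbb{C}^{d'})$ vanishes on $\Lambda_{1} \times \cdots \times \Lambda_{d'}$, then freezing $(z_{2}, \ldots, z_{d'}) \in \Lambda_{2} \times \cdots \times \Lambda_{d'}$ --- real points --- makes $z_{1} \mapsto F(z_{1}, z_{2}, \ldots, z_{d'})$ an entire function with $|F(x_{1}+iy_{1}, z_{2}, \ldots, z_{d'})| \lesssim e^{n_{1} y_{1}^{2}}$, hence a member of $O^{n_{1}}(\mathbb{C})$ vanishing on $\Lambda_{1}$; the one-dimensional result makes it vanish identically. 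So $F$ vanishes on $\mathbb{C} \times \Lambda_{2} \times \cdots \times \Lambda_{d'}$, and iterating in each remaining coordinate (freezing the already-treated variables at arbitrary complex values, which only alters the implicit constant in the bound) gives $F \equiv 0$ after $d'$ steps. So it suffices to treat $d' = 1$.

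For $d' = 1$, let $0 \neq F \in O^{n}(\mathbb{C})$ vanish on $\Lambda = \{\pm \lambda(k) : k \in \mathbb{N}_{0}\}$ with $\lambda$ strictly increasing. If $\lambda$ is bounded, $\Lambda$ has a finite accumulation point and $F \equiv 0$ by the identity theorem, so I may assume $\lambda(k) \to \infty$ and, after dividing out a possible zero at $0$, that $F(0) \neq 0$. Write $n_{F}(r)$ for the number of zeros of $F$ in $\overline{B(0,r)}$; strict monotonicity of $\lambda$ gives $n_{F}(\lambda(k)) \geq 2(k+1)$ for every $k$, since $\pm\lambda(0), \ldots, \pm\lambda(k)$ are $2(k+1)$ distinct zeros inside $\overline{B(0,\lambda(k))}$.

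The crux is the bound $\limsup_{r\to\infty} n_{F}(r)/r^{2} \leq 2en$. It follows from $|F(z)| \lesssim e^{n(\mathrm{Im}\, z)^{2}} \leq e^{n|z|^{2}}$, so that $\log M(r) \leq n r^{2} + O(1)$, together with Jensen's formula: using $n_{F}(t) \geq n_{F}(r)$ on $[r, \beta r]$, for every $\beta > 1$,
\begin{align*}
n_{F}(r) \log\beta &\leq \int_{0}^{\beta r} \frac{n_{F}(t)}{t}\, dt = \frac{1}{2\pi}\int_{0}^{2\pi}\log|F(\beta r e^{i\theta})|\, d\theta - \log|F(0)| \\
&\leq \log M(\beta r) + O(1) \leq n\beta^{2} r^{2} + O(1).
\end{align*}
Minimizing $\beta^{2}/\log\beta$ over $\beta > 1$ gives the value $2e$, attained at $\beta = \sqrt{e}$, whence $n_{F}(r) \leq 2en\, r^{2} + O(1)$ and the claimed $\limsup$ bound. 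Now choose a subsequence $k_{j} \uparrow \infty$ along which $\lambda(k_{j})/\sqrt{k_{j}} \to L := \liminf_{k}\lambda(k)/\sqrt{k} < 1/\sqrt{ne}$. Then
\[
2en \;\geq\; \limsup_{r\to\infty}\frac{n_{F}(r)}{r^{2}} \;\geq\; \lim_{j\to\infty}\frac{2(k_{j}+1)}{\lambda(k_{j})^{2}} \;=\; \frac{2}{L^{2}} \;>\; 2en
\]
(with the understanding that $L = 0$ makes the middle quantity $+\infty$), since $L^{2} < 1/(ne)$. This contradiction forces $F \equiv 0$, completing the one-dimensional case and hence the lemma.

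The step I expect to be the real obstacle is extracting the \emph{optimal} constant $2en$ in the zero-counting estimate: a lazy use of Jensen's formula at dilation $\beta = e$ only gives $\limsup n_{F}(r)/r^{2} \leq e^{2}n$, which is too weak here, because $e^{2}n < 2/L^{2}$ would require $L^{2} < 2/(ne^{2})$ whereas the hypothesis grants only $L^{2} < 1/(ne)$ and $1/(ne) > 2/(ne^{2})$. The threshold $1/\sqrt{ne}$ is calibrated precisely to the optimal dilation $\beta = \sqrt{e}$, so the argument must be run at that radius, with the $O(1)$ error terms carefully absorbed into the limit.
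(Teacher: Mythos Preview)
The paper does not prove this lemma; it is quoted verbatim as Proposition~2.5 of \cite{FCM} and used as a black box. Your argument is therefore not being compared against anything in the present paper, but it is correct and is essentially the standard proof one finds in the source: reduce to one variable by freezing the remaining coordinates (the growth bound $\prod_j e^{n_j y_j^2}$ restricts to $C\,e^{n_\ell y_\ell^2}$ once the other variables are frozen, so each slice lies in $O^{n_\ell}(\mathbb{C})$), and in one variable combine Jensen's formula with the optimal dilation $\beta=\sqrt{e}$ to obtain $\limsup_{r\to\infty} n_F(r)/r^2 \le 2en$, which is exactly the threshold matched by the hypothesis $\liminf_k \lambda(k)/\sqrt{k}<1/\sqrt{ne}$.

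Two small remarks. First, your reading of ``increasing'' as \emph{strictly} increasing is the intended one and is needed for the count $n_F(\lambda(k))\ge 2(k+1)$; with a merely non-decreasing $\lambda$ the distinct points can be much sparser and the statement would fail as written. Second, your closing paragraph is exactly on point: a careless choice of dilation (say $\beta=e$) gives only $\limsup n_F(r)/r^2 \le e^2 n$, which does not close against the hypothesis, so the minimization $\min_{\beta>1}\beta^2/\log\beta = 2e$ at $\beta=\sqrt{e}$ is genuinely the crux.
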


The Fourier transform exhibits a reciprocal-scaling property for functions in these analytic function spaces: it maps \( F \in \mathcal{O}_m^n(\mathbb{C}^{\mathbb{d}}) \) to \( \mathcal{O}_{\frac{1}{4n}}^{\frac{1}{4m}}(\mathbb{C}^{\mathbb{d}}) \), inversely relating the defining parameters \( m \) and \( n \) with a scaling factor of \( \frac{1}{4} \). 
Note that we only prove the one-dimensional case and the high-dimensional cases can be proved by the same process. 
\begin{lem}If \(F \in  \mathcal{O}_m^n(\mathbb{C})\), then \(\mathcal{F}F\in  \mathcal{O}_{\frac{1}{4n}}^{\frac{1}{4m}}(\mathbb{C})\).
\end{lem}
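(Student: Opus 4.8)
## Proof Proposal

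The plan is to verify the claimed decay/growth estimate directly from the definition of the Gaussian-type function space, using the Fourier transform's well-known ability to convert a Gaussian weight in one variable into a Gaussian weight with reciprocal parameter. First I would recall that $F \in \mathcal{O}_m^n(\mathbb{C})$ means $F$ is entire and satisfies $|F(x+iy)| \lesssim e^{-m x^2} e^{n y^2}$ for all $x, y \in \mathbb{R}$. In particular $F$ restricted to $\mathbb{R}$ is square-integrable (indeed it decays like a Gaussian), so $\mathcal{F}F$ is well-defined and, since $F$ has at most Gaussian decay on horizontal lines, a contour-shift argument shows $\mathcal{F}F$ extends to an entire function. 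The substance of the lemma is the two-sided estimate, and the natural route is to bound $|\mathcal{F}F(\xi + i\eta)|$ by integrating $|F(t)| e^{|t| |\eta|}$ (coming from $|e^{-it(\xi+i\eta)}| = e^{t\eta}$) against $t$.

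The key computation I would carry out is the following: for $\xi, \eta \in \mathbb{R}$,
\[
|\mathcal{F}F(\xi + i\eta)| = \left| \int_{\mathbb{R}} F(t) e^{-it\xi} e^{t\eta}\, dt \right| \lesssim \int_{\mathbb{R}} e^{-m t^2} e^{t\eta}\, dt,
\]
where I have used the real-line decay bound $|F(t)| \lesssim e^{-mt^2}$ (the case $y=0$ of the hypothesis). Completing the square in the exponent, $-mt^2 + t\eta = -m\left(t - \frac{\eta}{2m}\right)^2 + \frac{\eta^2}{4m}$, the Gaussian integral evaluates to a constant multiple of $e^{\eta^2/(4m)}$. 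This already gives the growth factor $e^{\frac{1}{4m}\eta^2}$ in the imaginary direction. To obtain the decay factor $e^{-\frac{1}{4n}\xi^2}$ in the real direction, I would shift the contour of integration: since $F$ is entire with the stated growth, for any $s \in \mathbb{R}$ we may write $\mathcal{F}F(\xi+i\eta) = \int_{\mathbb{R}} F(t+is) e^{-i(t+is)(\xi+i\eta)}\, dt$, and then bound $|F(t+is)| \lesssim e^{-mt^2} e^{ns^2}$ and $|e^{-i(t+is)(\xi+i\eta)}| = e^{s\xi + t\eta}$. Optimizing the free parameter $s$ — choosing $s$ proportional to $-\xi$, namely $s = -\xi/(2n)$ — produces the factor $e^{-\xi^2/(4n)}$ up to the already-accounted Gaussian integral in $t$; one then combines both shifts (in $s$ controlling $\xi$-decay, and completing the square in $t$ controlling $\eta$-growth) to reach $|\mathcal{F}F(\xi+i\eta)| \lesssim e^{-\frac{1}{4n}\xi^2} e^{\frac{1}{4m}\eta^2}$.

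The main obstacle I anticipate is making the contour-shift rigorous: one must justify that $\int_{\mathbb{R}} F(t+is) e^{-i(t+is)\zeta}\, dt$ is independent of $s$, which requires showing the integrals over the vertical segments at $t = \pm R$ tend to zero as $R \to \infty$. This follows from the Gaussian decay $|F(\pm R + iu)| \lesssim e^{-mR^2} e^{nu^2}$ for $u$ between $0$ and $s$, which crushes the bounded exponential factors, so Cauchy's theorem applies on the rectangle and the vertical contributions vanish in the limit. A secondary bookkeeping point is handling the two parameters $m$ and $n$ simultaneously: the cleanest presentation is to perform a single shift $t \mapsto t + is$ with $s$ chosen at the end, carry both the $e^{ns^2}$ cost and the $e^{s\xi}$ gain, and complete the square in $t$ against $e^{t\eta - mt^2}$; choosing $s = -\xi/(2n)$ then yields exactly the exponents $\tfrac{1}{4n}$ and $\tfrac{1}{4m}$. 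The high-dimensional case is identical coordinatewise, as the excerpt already notes, so restricting to $d' = 1$ loses nothing.
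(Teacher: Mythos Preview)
Your proposal is correct and follows essentially the same approach as the paper: both shift the contour of integration to $\mathrm{Im}\,t = s$ via Cauchy's theorem, complete the square in the integration variable to extract the $e^{\eta^{2}/(4m)}$ growth, and then optimize the shift parameter $s = -\xi/(2n)$ to obtain the $e^{-\xi^{2}/(4n)}$ decay. The only difference is presentational---you spell out the rectangle-contour justification of the shift more carefully than the paper, which simply invokes ``Cauchy's theorem'' in one line.
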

\begin{proof} 
For \(F \in O_m^n(\mathbb{C})\) with \(|F(x+iy)| \leq C e^{-m x^2} e^{n y^2}\), its Fourier transform is:  
\[
\mathcal{F}F(\omega) = \int_{\mathbb{R}} F(x) e^{- i \omega x} dx, \quad \omega \in \mathbb{R}.
\]  
By Cauchy's theorem, the analytic continuation to complex frequency \(\omega + i\eta\):  
\[
\mathcal{F}F(\omega + i\eta) = \int_{\mathbb{R}} F(x+iy) e^{- i (\omega+i\eta)(x+iy)} dx.
\]
For \(y \in \mathbb{R}\), we get the modulus estimate :  
\[
|\mathcal{F}F(\omega + i\eta)| \leq \int_{\mathbb{R}} |F(x+iy)| e^{ \eta x} e^{ \omega y} dx.
\]  
Since \(|F(x+iy)| \leq C e^{-m x^2} e^{b n^2}\), we have  
\[
|\mathcal{F}F(\omega + i\eta)| \leq C e^{n y^2 + \omega y} \int_{\mathbb{R}} e^{-m x^2 + \eta x} dx.
\]
Through the completion of the square in the exponential argument:
\[
-m x^2 + \eta x = -m \left(x - \frac{\eta}{2m}\right)^2 + \frac{\eta^2}{4m}.
\]  
Substituting this back into the modulus estimate, we obtain:
\[
|\mathcal{F}F(\omega + i\eta)| \lesssim  e^{n y^2 + \omega y + \frac{ \eta^2}{4m}}.
\]
The quadratic expression \(n y^2 +\omega y\) achieves its minimum at \(y = -\frac{\omega}{2n}\),
which gives the lower bound
\[n y^2 +\omega y\geq -\frac{\pi^2 \omega^2}{n}.\]  
Based on the above results, we can derive the final frequency-domain estimate:
\[
|\mathcal{F}F(\omega + i\eta)| \lesssim e^{-\frac{\omega^2}{4n} + \frac{\eta^2}{4m}}.
\]  
\end{proof}
% From the proof of the above lemma, and the arithmetic that
% \[
% |L_{A}F(\mu)|=|\frac{1}{\sqrt{b}}e^{i \frac{d}{2b} \mu^2} \mathcal{F}(\tilde{F})(d\mu)|=
% |\frac{1}{\sqrt{b}}\mathcal{F}(\tilde{F})(\frac{\mu}{d})|,
% \]
% we get that \(L_{A}F\in  \mathcal{O}_{\frac{1}{4nd}}^{\frac{1}{4md}}(\mathbb{C}^{d'})\) if \(\tilde{F}\in  \mathcal{O}_{\frac{1}{4n}}^{\frac{1}{4m}}(\mathbb{C}^{d'})\).
The next lemma constitutes a key component in the proof of Theorem 1.1. It asserts, that the modulus squared of the short - time linear canonical transform, \( |V^{(A)}_{\varphi}f|^{2} \), extends from \( \mathbb{R}^{2d'} \) to an entire function belonging to \( \mathcal{O}^{A}(\mathbb{C}^{2d'}) \), provided that \( \varphi \in \mathcal{O}_{m}^{n}(\mathbb{C}^{d'}) \).

\begin{lem}
Let \(m, n \in \mathbb{R}_{>0}^{d'}\), let \(\varphi \in \mathcal{O}_{m}^{n}(\mathbb{C}^{d'})\), and let 
\[
A:=\left(2n_{1}, \ldots, 2n_{d'}, \frac{1}{2m_{1}b^{2}}, \ldots, \frac{1}{2m_{d}b^{2}}\right) \in \mathbb{R}_{>0}^{2 d'}. \tag{10}
\]
Then for every \(f \in L^{2}(\mathbb{R}^{d'})\), it holds that \(|V^{(A)}_{\varphi} f|^{2} \in \mathcal{O}^{A}(\mathbb{C}^{2 d'})\); i.e., \(|V^{(A)}_{\varphi} f|^{2}\) extends from \(\mathbb{R}^{2 d'}\) to an entire function belonging to the space \(\mathcal{O}^{A}(\mathbb{C}^{2 d'})\).
\end{lem}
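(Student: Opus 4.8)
The plan is to reduce the claim about $|V^{(A)}_{\varphi}f|^{2}$ to a statement about the ordinary short-time Fourier transform via Proposition~\ref{formula}, and then to invoke the known STFT analogue (the corresponding statement in \cite{FCM}, which is exactly Lemma~2.6 there for the Gaussian-type window class $\mathcal{O}_{m}^{n}$). Concretely, Proposition~\ref{formula} gives
\[
V_{\varphi}^{(A)}f(x,\mu)=e^{\frac{i\mu(d\mu-x)}{b}}\,V^{(B)}_{L_{C}(\varphi)}L_{A}(f)(\mu,d\mu-x),
\]
so that, since the prefactor is unimodular,
\[
|V_{\varphi}^{(A)}f(x,\mu)|^{2}=\bigl|V^{(B)}_{L_{C}(\varphi)}L_{A}(f)(\mu,d\mu-x)\bigr|^{2}.
\]
The map $(x,\mu)\mapsto(\mu,d\mu-x)$ is a real-linear (indeed affine-linear with integer-like) invertible change of variables that extends to a biholomorphism of $\mathbb{C}^{2d'}$, so it suffices to show $|V^{(B)}_{L_{C}(\varphi)}G|^{2}\in\mathcal{O}^{A'}(\mathbb{C}^{2d'})$ for $G=L_{A}(f)\in L^{2}(\mathbb{R}^{d'})$ and a suitable parameter vector $A'$, and then to track how the change of variables transforms the Gaussian decay/growth exponents to recover precisely the stated $A$ in \eqref{10}.

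First I would record the window class of $L_{C}(\varphi)$. Using $L_{C}f(\mu)=\frac{1}{\sqrt{b}}e^{i\frac{d\mu^{2}}{2b}}\mathcal{F}\!\bigl(f e^{\frac{i\cdot 0}{2b}t^{2}}\bigr)(\mu/b)$ for $C=(0,b,-\tfrac1b,d)$ — note $a_{C}=0$, so there is no chirp — we get $L_{C}(\varphi)(\mu)=\frac{1}{\sqrt{b}}e^{i\frac{d\mu^{2}}{2b}}\mathcal{F}\varphi(\mu/b)$. By Lemma~3.7 (the reciprocal-scaling lemma), $\mathcal{F}\varphi\in\mathcal{O}_{1/(4n)}^{1/(4m)}(\mathbb{C}^{d'})$; the dilation $\mu\mapsto\mu/b$ rescales this to $\mathcal{O}_{1/(4nb^{2})}^{b^{2}/(4m)}$, and the unimodular-on-$\mathbb{R}$ chirp $e^{i d\mu^{2}/(2b)}$ has modulus $e^{-d\,\mathrm{Im}(\mu)\mathrm{Re}(\mu)/b}\cdot(\dots)$ on $\mathbb{C}$, i.e. it only contributes cross terms and a controlled growth that does not spoil membership in a (possibly slightly enlarged) class $\mathcal{O}_{m'}^{n'}(\mathbb{C}^{d'})$; I would make the bookkeeping precise here. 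With $L_{C}(\varphi)$ in a Gaussian-type class, the STFT fact from \cite{FCM} applies directly: for $B=(0,1,-1,0)$ one has $|V_{g}G|^{2}\in\mathcal{O}^{(2\tilde n,\,1/(2\tilde m))}(\mathbb{C}^{2d'})$ whenever $g\in\mathcal{O}_{\tilde m}^{\tilde n}(\mathbb{C}^{d'})$. The remaining work is to (i) handle the fact that our $B=(-d,b,-\tfrac1b,0)$ is not the Fourier matrix, for which I would again peel off the chirp/dilation relating $V^{(B)}$ to $V^{(0,1,-1,0)}$ via Lemma~2.6 (the intertwining of $L_{A}$ with $M_{\mu}^{(A)}$) or directly via the identity $V^{(B)}_{g}G(x,\mu)=\tfrac{1}{\sqrt b}e^{i\frac{d'_B\mu^{2}}{2b}}V_{\tilde g}\tilde G(\text{rescaled args})$, and (ii) propagate all exponents through the affine change $(x,\mu)\mapsto(\mu,d\mu-x)$ and match them to $(2n_{1},\dots,2n_{d'},\tfrac{1}{2m_{1}b^{2}},\dots,\tfrac{1}{2m_{d'}b^{2}})$.

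An alternative, more self-contained route avoids citing \cite{FCM} and estimates directly: write $|V^{(A)}_{\varphi}f(x,\mu)|^{2}=V^{(A)}_{\varphi}f(x,\mu)\,\overline{V^{(A)}_{\varphi}f(x,\mu)}$, expand both factors as integrals against $K_{A}(t,\mu)\overline{T_{x}\varphi(t)}f(t)$, use the Cauchy--Schwarz/Fubini argument to see that the $(x,\mu)$-dependence enters only through $\overline{\varphi(t-x)}\varphi(s-x)$ and the chirp $e^{\pm i(\cdots\mu\cdots)}$, and then analytically continue $x\mapsto x+iy$, $\mu\mapsto\mu+i\eta$; the Gaussian bound $|\varphi(x+iy)|\lesssim\prod_j e^{-m_jx_j^2}e^{n_jy_j^2}$ together with the $L^{2}$ bound on $f$ yields a Gaussian majorant in $(y,\eta)$, and completing the square in the $t,s$ integrals (a Gaussian integral) produces exactly the exponents in \eqref{10}.

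The main obstacle, in either approach, is the careful tracking of the quadratic exponents — in particular keeping the off-diagonal (cross) terms from the chirps $e^{iat^{2}/(2b)}$ under control when passing to complex arguments, and verifying that the class $\mathcal{O}^{A}$ (which allows only growth, no decay, and no cross terms) is genuinely achieved after the linear change of variables rather than merely some anisotropic Gaussian class. I expect the bound $|V_\varphi^{(A)}f|^2\in\mathcal O^A$ to follow once the Gaussian integral in $t$ (resp. $t,s$) is evaluated and one checks that the resulting exponent, as a quadratic form in $(y,\eta)$, is diagonal with the claimed coefficients; the computation is routine but error-prone, so I would organize it as a single completion-of-the-square lemma and reuse it.
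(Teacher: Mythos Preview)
Your alternative, self-contained route is correct and is essentially what the paper does, though packaged differently. Instead of expanding $|V_{\varphi}^{(A)}f(x,\mu)|^{2}$ directly as a double integral in $(t,s)$, the paper takes the Fourier transform in the $x$-slot and invokes the autocorrelation identity from \cite{ZL}, namely $\mathcal{F}|V_{\varphi}^{(A)}f(\cdot,u)|^{2}=\tfrac1b\bigl[(\hat{\tilde f}\cdot T_{u/b}\hat{\tilde\varphi})*(\hat{\tilde f}\cdot T_{u/b}\hat{\tilde\varphi})^{\#}\bigr]$; this convolution is rewritten as an inner product $\langle f_{s}^{A},T_{u/b}\varphi_{s}^{A}\rangle$ of tensor products built from $\hat{\tilde f}$ and $\hat{\tilde\varphi}$, bounded using the reciprocal-scaling lemma for $\mathcal{F}$, and then fed through an inverse Fourier integral to recover the analytic extension in $(x,\mu)$. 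Both versions rest on the same two points: the chirp $e^{iat^{2}/(2b)}$ is unimodular on the real integration contour, so it drops out of all modulus bounds, and a completion of the square then produces exactly the diagonal exponents in~(10). Your direct version is arguably cleaner, since it uses the Gaussian bound on $\varphi$ itself and sidesteps whether $\tilde\varphi$ genuinely lies in $\mathcal O_{m}^{n}$.

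Your primary route through Proposition~\ref{formula} plus a black-box appeal to \cite{FCM} has a real gap, precisely at the step you flagged as ``bookkeeping''. The chirp $e^{id\mu^{2}/(2b)}$ in $L_{C}\varphi$ contributes an off-diagonal factor $e^{-(d/b)\,\mathrm{Re}(\mu)\,\mathrm{Im}(\mu)}$ on $\mathbb{C}$, so $L_{C}\varphi$ lands only in a strictly enlarged $\mathcal{O}_{m'}^{n'}$; and the shear $(x,\mu)\mapsto(\mu,d\mu-x)$ independently introduces a cross term, since a diagonal bound $c_{1}(\mathrm{Im}\,w_{1})^{2}+c_{2}(\mathrm{Im}\,w_{2})^{2}$ pulls back to $c_{2}y^{2}+(c_{1}+c_{2}d^{2})\eta^{2}-2c_{2}d\,y\eta$. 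These two sources of off-diagonal terms \emph{do} cancel in the end, but the cancellation is only visible once you expand $|V^{(B)}_{L_{C}\varphi}L_{A}f|^{2}$ and pair the phases symmetrically --- at which point you have effectively carried out the direct computation. Treating the \cite{FCM} spectrogram bound as a black box and transporting exponents through the change of variables gives only some $\mathcal{O}^{A'}$ with a strictly larger $A'$ depending on $d$, not the exact $A$ in~(10); so that route proves a weaker lemma than the one stated.
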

\begin{proof}
Step 1: Let \(z=x+i \rho \in \mathbb{C}^{d'}\). It is clear that \(\tilde{\varphi}\in  \mathcal{O}_{m}^{n}(\mathbb{C}^{d'})\) if \(\varphi \in \mathcal{O}_{m}^{n}(\mathbb{C}^{d'})\). From the proof of Lemma 3.4 and the arithmetic that
\[
|L_{A}\varphi(\mu)|=|\frac{1}{\sqrt{b}}e^{i \frac{d}{2b} \mu^2} \mathcal{F}(\tilde{\varphi})(d\mu)|=
|\frac{1}{\sqrt{b}}\mathcal{F}(\tilde{\varphi})(\frac{\mu}{d})|,
\]
we get that \(L_{A}\varphi\in  \mathcal{O}_{\frac{1}{4nd}}^{\frac{1}{4md}}(\mathbb{C}^{d'})\) if \(\tilde{\varphi}\in  \mathcal{O}_{m}^{n}(\mathbb{C}^{d'})\). 
Thus, we can conclude that \(|\hat{\tilde{\varphi}}(x+iy)|\lesssim \prod_{j=1}^d e^{-\frac{1}{4n_{j}} x_{j}^{2}} e^{\frac{1}{4m_j} y_{j}^{2}}\), 
then the modulus of the inner product of an arbitrary function \(u \in L^{1}(\mathbb{R}^{d'})\) with a complex shift of \(\varphi_{t}\) by \(z = x + i \rho\) is therefore upper bounded by
\begin{equation*} 
\left|\left\langle u, T_{z} \varphi^{A}_{t}\right\rangle\right|  \lesssim  
\prod_{j=1}^{d} e^{-\frac{1}{8n_{_{j}}} t_{j}^{2}} e^{\frac{1}{2 m_{j}} \rho_{j}^{2}} \|u\|_{L^{1}(\mathbb{R}^{d'})}.
\end{equation*}
where \(
f^{A}_{\omega}(t):=(T_{\omega} \widehat{\tilde{f}}(t))\overline{\widehat{\tilde{f}}(\overline{t})},\) and \(\tilde{f}(t):=e^{i  \frac{a}{2b} t^{2}} f(t), \omega \in \mathbb{C}^{d'}. \)
Now replace in the previous estimate the function \( u \) by the tensor product \( f_t \), where \( f \in L^2(\mathbb{R}^{d'}) \). The Cauchy-Schwarz inequality shows that
\[
\left\| f^{A}_{t} \right\| _{L^{1}\left(\mathbb{R}^{d'}\right)} \leq \| \widehat{\tilde{f}} \| _{L^{2}\left(\mathbb{R}^{d'}\right)}^{2}=\| f \| _{L^{2}\left(\mathbb{R}^{d'}\right)}^{2}
\]
Consider the Fourier integral 
\[
F\left(z, z'\right):=\int_{\mathbb{R}^{d'}}\left\langle f_{t}, T_{z} \varphi^{A}_{t}\right\rangle e^{ i z' \cdot t} \, dt .
\]
For \(z' = v + i\delta \in \mathbb{C}^{d'}\) with \(v, \delta \in \mathbb{R}^{d'}\), the Fourier integral \(F\) satisfies the estimate
\[
\begin{aligned}
&\left|F\left(z, z'\right)\right| \leq \int_{\mathbb{R}^{d'}} \left|\left\langle f, T_{z} \varphi^{A}_{t}\right\rangle\right| \prod_{j=1}^{d} e^{- \delta_{j} t_{j}} \, dt \\
&\quad \leq C(m) \|f\|_{L^{2}(\mathbb{R}^{d'})}^{2}\prod_{j=1}^{d} e^{\frac{1}{2m_{j}} \rho_{j}^{2}} \prod_{j=1}^{d} e^{2n_{j} \delta_{j}^{2}} ,
\end{aligned}
\]
for some constant \(C(m)\) depending only on \(m\). Moreover, an application of Lemma 3.2 shows that \(F\) defines an entire function on \(\mathbb{C}^{2d'}\).

 Step 2: We apply the bounds derived in Step 1 to the modulus squared of the STLCT. To do so, by \cite{ZL}, we have
\[\mathcal{F} | V^{(A)}_\varphi f(\cdot, u)|^2 = \frac{1}{b} \left[ \left( \widehat{\tilde{f}} \cdot T_{\frac{u}{b}} \widehat{\tilde{\varphi}} \right) * \left( \widehat{\tilde{f}} \cdot T_{\frac{u}{b}} \widehat{\tilde{\varphi}} \right)^\# \right]\]
for fixed \( u \in \mathbb{R}^{d'} \). The convolution \(\left( \widehat{\tilde{f}} \cdot T_{\frac{u}{b}} \widehat{\tilde{\phi}} \right) * \left( \widehat{\tilde{f}} \cdot T_{\frac{u}{b}} \widehat{\tilde{\phi}} \right)^\# \) evaluated at \(s\in \mathbb{R}^{d'}\) is given by 
\begin{align*}
\left( \widehat{\tilde{f}} \cdot T_{\frac{u}{b}} \widehat{\tilde{\varphi}} \right) * \left( \widehat{\tilde{f}} \cdot T_{\frac{u}{b}} \widehat{\tilde{\varphi}} \right)^\#(s)
&=\int_{\mathbb{R}^{d'}} \widehat{\tilde{f}}(-(s - t))\overline{\widehat{\tilde{\varphi}}(-(s - t)-\frac{u}{b})\widehat{\tilde{f}}(t)}\widehat{\tilde{\varphi}}(t - \frac{u}{b})dt\\
&=\int_{\mathbb{R}^{d'}} f^{A}_{s}(t)\overline{T_{\frac{u}{b}}\varphi^{A}_{s}(t)}dt\\
&= \langle f^{A}_{s}, T_{\frac{u}{b}}\varphi^{A}_{s}\rangle_{L^{1}(\mathbb{R}^{d'})\times L^{\infty}(\mathbb{R}^{d'})}\\
&= \langle f^{A}_{s}, T_{\frac{u}{b}}\varphi^{A}_{s}\rangle.
\end{align*} Then we obtain that
\[
b\left|V_{\varphi}^{(A)} f(x, \omega)\right|^{2}=\mathcal{F}^{-1}\left(t \mapsto\left\langle f^{A}_{t}, T_{\frac{\omega}{b}} \varphi^{A}_{t}\right\rangle\right)(x) = F(\frac{\omega}{b}, x),
\]
for every \((x, \omega) \in \mathbb{R}^{2d'}\). According to Step 1, the function \(F\) extends from \(\mathbb{R}^{2d'}\) to an entire function of \(2d'\) complex variables and satisfies the growth estimate
\[
\left|F\left(\frac{z'}{b}, z\right)\right| \lesssim \prod_{j=1}^{d} e^{\frac{1}{2m_{j}b^{2}}|z'|^{2}} \prod_{j=1}^{d'} e^{\frac{1}{2n_{j}}|z|^{2}}.
\]
This implies that for \(A \in \mathbb{R}_{>0}^{2d'}\) given as in Equation, we have \(|V^{A}_{\varphi} f|^{2} \in O^{A}(\mathbb{C}^{2d'})\).
\end{proof}

With the preparations above, we are ready to prove Theorem 1.1.
\begin{thm}
Let \(m, n \in \mathbb{R}_{>0}^{d'}\), and let \(0 \neq \varphi \in \mathcal{O}_{m}^{n}(\mathbb{C}^{d'})\) be a window function. Suppose that \(\Lambda = A(\sqrt{\mathbb{Z}})^{2d'}\) is a rectangular square-root lattice such that the generating matrix \(A = \mathrm{diag}(\tau_{1}, \ldots, \tau_{d'}, v_{1}, \ldots, v_{d'})\) with \(\tau, v \in \mathbb{R}_{>0}^{d'}\) satisfies
\[
\tau_{j} < \frac{1}{\sqrt{2n_{j}}e}, \quad v_{j} < b\sqrt{\frac{2m_{j}}{e}}, \quad j \in \{1, \ldots, d'\}.
\]
Then the following statements are equivalent for every \(f, h \in L^{2}(\mathbb{R}^{d'})\):
\[
(1) \ \left|V^{(A)}_{\varphi} f(\lambda)\right| = \left|V^{(A)}_{\varphi} h(\lambda)\right| \text{ for every } \lambda \in \Lambda, \quad (2) \ f \sim h.
\]
\end{thm}
\begin{proof}
The fact that (2) implies (1) is trivial. It remains to show that (1) implies (2). To this end, define \(\Psi := M(\sqrt{\mathbb{Z}})^{d'}\) and \(\Gamma := N(\sqrt{\mathbb{Z}})^{d'}\) with \(M = \mathrm{diag}(\tau_{1}, \ldots, \tau_{d'})\) and \(N = \mathrm{diag}(v_{1}, \ldots, v_{d'})\), so that \(\Lambda = \Psi \times \Gamma\). The choice of \(\tau\) and \(v\), in conjunction with Proposition 2.5 in \cite{FCM}, if
\(\tau_{j}<\frac{1}{\sqrt{2n_{j}}e}\) and 
\(v_j<\frac{1}{\sqrt{\frac{1}{2m_{j}b^{2}}}e}=b\sqrt{\frac{2m_{j}}{e}}\),
then \(\Lambda\) is a uniqueness set for the space \(O^{A}(\mathbb{C}^{2d'})\) with 
\[
A:=\left(2n_{1}, \ldots, 2n_{d'}, \frac{1}{2m_{1}b^{2}}, \ldots, \frac{1}{2m_{d'}b^{2}}\right) \in \mathbb{R}_{>0}^{2 d'}. \tag{10}
\]
According to above Lemma, it holds that \(|V^{A}_{\varphi} f|^{2} \in O^{A}(\mathbb{C}^{2d'})\) and \(|V^{A}_{\varphi} h|^{2} \in O^{A}(\mathbb{C}^{2d'})\). Since \(\Lambda\) is a uniqueness set for \(O^{A}(\mathbb{C}^{2d'})\), it follows that \(|V^{A}_{\varphi} f|^{2} = |V^{A}_{\varphi} h|^{2}\), i.e., the spectrograms of \(f\) and \(h\) with respect to the window function \(\varphi\) agree everywhere on \(\mathbb{R}^{d'}\), implies that \(f \sim h\).
\end{proof}

The prototypical example of a Hermite function is the standard Gaussian \( \mathfrak{h}_{0} \), given by:  
\[
\mathfrak{h}_{0}(t) = 2^{d/4} e^{-\pi\|t\|_{2}^{2}},
\]  
where \(\|t\|_{2}^{2} = \sum_{j=1}^{d'} t_{j}^{2}\). Any other Hermite function is a product of \( \mathfrak{h}_{0} \) with a polynomial. Choosing Hermite functions as window functions leads to the following concise statement:

\begin{cor}[Combined result for Gaussian-type and Hermite function]
Let $\gamma > 0$ and let \(\varphi\) be a window function of either form: 
\begin{enumerate}
  \item [(1)] \(\varphi(x) = p(x)e^{-\gamma\|x\|_2^2}\); where \(p: \mathbb{C}^{d'}\rightarrow \mathbb{C}\) be an entire function of exponential type;
  \item [(2)] \(\varphi \in L^{2}(\mathbb{R}^{d'})\) be an artitary Hermite function.
\end{enumerate}
If the sampling parameters satisfy
\begin{equation*}
    \alpha < \sqrt{\frac{1}{2\gamma e}}, \quad \beta < b\sqrt{\frac{2\gamma}{e}},
\end{equation*}
then for all \(f, h \in L^2(\mathbb{R}^{d'})\), the following are equivalent:
\begin{enumerate}
    \item \(|V_\varphi f(\lambda)| = |V_\varphi h(\lambda)|\) for every \(\lambda \in \alpha(\sqrt{\mathbb{Z}})^{d'} \times \beta(\sqrt{\mathbb{Z}})^{d'}\);
    \item \(f \sim h\).
\end{enumerate}
Specifically for Hermite functions (case (ii)), the choice \(\gamma = \frac{1}{2b}\) yields the simplified condition \(\alpha < \sqrt{\frac{b}{e}}\).
\end{cor}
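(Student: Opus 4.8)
The plan is to derive the corollary from the preceding theorem (Theorem 1.1) by checking that, in each case, the window $\varphi$ lies in $\mathcal{O}_{m}^{n}(\mathbb{C}^{d'})$ for a suitable pair $(m,n)$, after which the prescribed sampling parameters fall within the admissible ranges of that theorem. First I would verify the membership. For a window of form (1), $\varphi(z)=p(z)\,e^{-\gamma\sum_{j}z_{j}^{2}}$ with $p$ entire of exponential type, write $z_{j}=x_{j}+iy_{j}$; then $|e^{-\gamma z_{j}^{2}}|=e^{-\gamma x_{j}^{2}}e^{\gamma y_{j}^{2}}$, and an exponential-type bound $|p(x+iy)|\lesssim e^{\sigma(|x|+|y|)}\le\prod_{j}e^{\sigma|x_{j}|}e^{\sigma|y_{j}|}$ is absorbed into Gaussians by completing the square ($-\gamma x_{j}^{2}+\sigma|x_{j}|\le-m_{j}x_{j}^{2}+\sigma^{2}/(4(\gamma-m_{j}))$ and $\gamma y_{j}^{2}+\sigma|y_{j}|\le n_{j}y_{j}^{2}+\sigma^{2}/(4(n_{j}-\gamma))$), giving $|\varphi(x+iy)|\lesssim\prod_{j}e^{-m_{j}x_{j}^{2}}e^{n_{j}y_{j}^{2}}$; hence $0\neq\varphi\in\mathcal{O}_{m}^{n}(\mathbb{C}^{d'})$ for \emph{every} $m,n\in\mathbb{R}_{>0}^{d'}$ with $m_{j}<\gamma<n_{j}$ for all $j$. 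A Hermite function (possibly after a coordinate rescaling) is exactly a polynomial times $e^{-\gamma\|\cdot\|_{2}^{2}}$ and a polynomial has exponential type $0$, so case (2) is a special instance of case (1), with $\gamma$ the Gaussian parameter of that Hermite function.

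Next I would match the parameters and invoke Theorem 1.1. Given $\alpha<\sqrt{1/(2\gamma e)}$ and $\beta<b\sqrt{2\gamma/e}$, the continuity and strict monotonicity of $t\mapsto\sqrt{1/(2te)}$ (decreasing) and $t\mapsto b\sqrt{2t/e}$ (increasing) let me choose a single $n_{0}>\gamma$ with $\alpha<\sqrt{1/(2n_{0}e)}$ and a single $m_{0}<\gamma$ with $\beta<b\sqrt{2m_{0}/e}$. Taking $m=(m_{0},\dots,m_{0})$, $n=(n_{0},\dots,n_{0})$, the previous step gives $0\neq\varphi\in\mathcal{O}_{m}^{n}(\mathbb{C}^{d'})$, while the square-root lattice $\alpha(\sqrt{\mathbb{Z}})^{d'}\times\beta(\sqrt{\mathbb{Z}})^{d'}$ is generated by $A=\mathrm{diag}(\alpha,\dots,\alpha,\beta,\dots,\beta)$, whose entries $\tau_{j}=\alpha$, $v_{j}=\beta$ satisfy exactly the bounds required by Theorem 1.1 for this $(m,n)$. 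That theorem then yields the equivalence of (1) and (2). The final sentence is just the substitution $\gamma=1/(2b)$, which turns $\sqrt{1/(2\gamma e)}$ into $\sqrt{b/e}$ (and, incidentally, also $b\sqrt{2\gamma/e}$ into $\sqrt{b/e}$).

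The only step with genuine content is the first one: one has to see that the Gaussian rate $\gamma$ is at once the supremum of admissible decay exponents and the infimum of admissible growth exponents, and that for a genuinely non-constant $p$ one cannot take $m_{j}=\gamma$ or $n_{j}=\gamma$ --- which is precisely why the \emph{open} sampling conditions of Theorem 1.1 remain satisfiable after perturbing $m_{0},n_{0}$ slightly away from $\gamma$. One must also keep in mind that the entire extension of $e^{-\gamma\|x\|_{2}^{2}}$ is $e^{-\gamma\sum_{j}z_{j}^{2}}$, not $e^{-\gamma\sum_{j}|z_{j}|^{2}}$, so that it genuinely contributes the factor $e^{\gamma y_{j}^{2}}$ in the imaginary directions; everything beyond this is routine bookkeeping.
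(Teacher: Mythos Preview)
Your proposal is correct and follows the natural route the paper implicitly intends: the corollary is stated without proof as an immediate consequence of Theorem~1.1, and your derivation --- showing $\varphi\in\mathcal{O}_{m}^{n}(\mathbb{C}^{d'})$ for every $m_{j}<\gamma<n_{j}$ via the exponential-type bound absorbed into the Gaussian, then perturbing $(m_{0},n_{0})$ off $\gamma$ so that the strict sampling bounds of Theorem~1.1 are met --- is exactly the expected argument. The observation that Hermite functions are polynomials times a Gaussian (hence fall under case~(1) with exponential type~$0$) and the final substitution $\gamma=1/(2b)$ are likewise what the paper has in mind.
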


\section{STLCT phase retrieval for lattices and band-limited functions}\ 

In this section, we extend the results of  Alaifari and Wellershoff to the case of STLCT phase retrieval. Specifically, we show that for any lattice, functions in \(L^{2}(\mathbb{R})\) can be constructed that do not agree with the global phase but whose Gabor LCT magnitudes sampled in the lattice agree. These functions have good concentration in both time and frequency and can be constructed to be real-valued for rectangular lattices. 
Moreover, we show that for functions in general bandlimited function spaces \(L^{p}([-B, B])\),  STLCT with the Gaussian window can still be accomplished on a uniform lattice. Additionally, we complete the proof of Proposition 3.2 in subsection 4.2.

\subsection{Counterexamples and phase retrieval for band-limited functions}\

For any given $m$, we can find $M_u$ such that there exists function \(f_{\pm}\)  for which the \(\mathcal{G}^{(A)}\) fails to realize phase retrieval on \(m\mathbb{Z}\times \mathbb{R}\) with the support of the following lemma. 

\begin{lem}
(\cite{WLCT} Covariance property)
\[
V_g^{(A)}\left(M_{u}^{(A)}T_{\tau}f\right)(x, \mu) = \sqrt{\frac{1}{ b}}e^{-\frac{id}{2b}(u^2-\mu^{2}) + \frac{i}{b}\tau(u - \mu)}
V_g f\left(x - \tau, \frac{1}{b}(\mu - u)\right),
\]
where \( x, \tau, u, \mu \in \mathbb{R} \). Especially,
\[
V_g^{(A)}\left(M_{u}^{(A)}f\right)(x, \mu) = \sqrt{\frac{1}{ b}}e^{-\frac{id}{2b}(u^2-\mu^{2})}
V_g f\left(x, \frac{1}{b}(\mu - u)\right),
\]
\end{lem}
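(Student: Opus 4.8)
The plan is to verify the identity by a direct computation from the definition (2.2.1), namely $V_g^{(A)}f(x,\mu)=\frac{1}{\sqrt{b}}\int_{\mathbb{R}}\overline{M_\mu^{(A)}T_x g(t)}\,f(t)\,dt$, substituting $M_u^{(A)}T_\tau f$ in place of $f$ and tracking how the two chirp modulations interact. First I would expand both factors using the definition (2.1.1) of the generalized modulation operator: the conjugated window contributes $e^{i(\frac{a}{2b}t^2-\frac{1}{b}t\mu+\frac{d}{2b}\mu^2)}\overline{g(t-x)}$, while the modulated--translated function contributes $e^{-i(\frac{a}{2b}t^2-\frac{1}{b}tu+\frac{d}{2b}u^2)}f(t-\tau)$. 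The $\frac{1}{\sqrt{b}}$ prefactor of the STLCT is carried along unchanged and accounts for the $\sqrt{1/b}$ in the final formula, since the plain STFT in (2.2.2) carries no such prefactor.

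The crucial observation is that the two quadratic chirps $\frac{a}{2b}t^2$ cancel exactly, so the integrand collapses to $e^{i\frac{d}{2b}(\mu^2-u^2)}\,e^{\frac{i}{b}t(u-\mu)}\,\overline{g(t-x)}\,f(t-\tau)$. I would then pull the $t$-independent constant $e^{i\frac{d}{2b}(\mu^2-u^2)}=e^{-\frac{id}{2b}(u^2-\mu^2)}$ out of the integral, which produces the first exponential factor appearing in the claimed formula.

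Next I would perform the substitution $s=t-\tau$. This turns $\overline{g(t-x)}$ into $\overline{g(s-(x-\tau))}$, giving the desired translation $x\mapsto x-\tau$ in the first STFT slot, and it factors the linear phase as $e^{\frac{i}{b}t(u-\mu)}=e^{\frac{i}{b}\tau(u-\mu)}\,e^{\frac{i}{b}s(u-\mu)}$, where the constant $e^{\frac{i}{b}\tau(u-\mu)}$ supplies the second exponential in the statement. The remaining integral $\int_{\mathbb{R}}f(s)\,\overline{g(s-(x-\tau))}\,e^{-is\cdot\frac{1}{b}(\mu-u)}\,ds$ is exactly the ordinary short-time Fourier transform (2.2.2) evaluated at $(x-\tau,\frac{1}{b}(\mu-u))$, which closes the general identity. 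The special case follows immediately upon setting $\tau=0$.

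I do not anticipate a genuine obstacle: the whole argument is a single change of variables wrapped around the chirp cancellation. The only point demanding care is the sign bookkeeping, in particular matching $e^{\frac{i}{b}s(u-\mu)}=e^{-is\cdot\frac{1}{b}(\mu-u)}$ against the STFT kernel $e^{-is\mu'}$ so that the frequency argument emerges as $\frac{1}{b}(\mu-u)$ rather than its negative. To be rigorous about convergence I would first carry out the computation for $f,g$ in a dense subclass such as the Schwartz space, where all integrals converge absolutely, and then extend to arbitrary $f,g\in L^2(\mathbb{R})$ by the continuity of both sides of the identity in the $L^2$ topology.
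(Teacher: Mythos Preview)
Your computation is correct: the chirp cancellation, the extraction of the $t$-independent phase, the substitution $s=t-\tau$, and the identification of the remaining integral with $V_g f\big(x-\tau,\tfrac{1}{b}(\mu-u)\big)$ via (2.2.2) all go through exactly as you describe, and the sign bookkeeping is handled correctly. Note, however, that the paper does not supply its own proof of this lemma at all; it is simply quoted as the ``Covariance property'' from \cite{WLCT}, so there is no in-paper argument to compare against---your direct verification is precisely the kind of proof one would give for such a cited identity.
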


%\frac{1}{\sqrt{2\pi}}
\begin{thm}
Let $f_{\pm}= (1 \pm i) M_{u}^{(A)}\varphi + (1 \mp i) M_{-u}^{(A)}\varphi\in L^{2}(\mathbb{R})$ with \(\varphi\) be a Gaussian window function, then it holds that $f_{+}$ and $f_{-}$ do not agree up to global phase and yet
\begin{equation*}
 | \mathcal{G}^{(A)} f_{+}|=|\mathcal{G}^{(A)} f_{-}|
\end{equation*}
\end{thm}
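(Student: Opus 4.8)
The plan is to reduce the identity $|\mathcal{G}^{(A)}f_{+}| = |\mathcal{G}^{(A)}f_{-}|$ to the auto-STLCT of the Gaussian via the covariance property, and then to a one-line modulus computation. Here $\mathcal{G}^{(A)} = V^{(A)}_{\varphi}$ with $\varphi$ a fixed Gaussian window; since only moduli occur we may take $\varphi(t) = c\,e^{-\gamma t^{2}}$ with $c,\gamma > 0$, and we assume $b > 0$. As the preamble to Lemma 4.3 makes precise, the equality is to be read on the lattice $m\mathbb{Z}\times\mathbb{R}$, with $u$ chosen in terms of $m$. By linearity of $f \mapsto V^{(A)}_{\varphi}f$ and the covariance property (Lemma 4.1 with $\tau = 0$), applied to $M^{(A)}_{u}\varphi$ and to $M^{(A)}_{-u}\varphi$ and using $(-u)^{2} = u^{2}$, the two summands of $\mathcal{G}^{(A)}f_{\pm}$ carry the \emph{same} scalar prefactor $\sqrt{1/b}\,e^{-\frac{id}{2b}(u^{2}-\mu^{2})}$, of modulus $b^{-1/2}$. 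Pulling it out yields
\[
\bigl|\mathcal{G}^{(A)}f_{\pm}(x,\mu)\bigr| \;=\; \frac{1}{\sqrt{b}}\;\Bigl|\,(1 \pm i)\,\Phi\bigl(x,\tfrac{\mu-u}{b}\bigr) + (1 \mp i)\,\Phi\bigl(x,\tfrac{\mu+u}{b}\bigr)\Bigr|,
\]
where $\Phi := V_{\varphi}\varphi$ is the ordinary STFT of the Gaussian with itself.

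The second step computes $\Phi$ and performs the modulus comparison. A direct Gaussian integral gives $\Phi(x,\xi) = C\,e^{-\gamma x^{2}/2}\,e^{-\xi^{2}/(8\gamma)}\,e^{-ix\xi/2}$ for some $C > 0$. Writing $a := \Phi(x,\tfrac{\mu-u}{b})$ and $c := \Phi(x,\tfrac{\mu+u}{b})$, one has the elementary identity
\[
\bigl|(1+i)a + (1-i)c\bigr|^{2} - \bigl|(1-i)a + (1+i)c\bigr|^{2} \;=\; 8\,\mathrm{Im}(a\bar{c}),
\]
so $|\mathcal{G}^{(A)}f_{+}(x,\mu)| = |\mathcal{G}^{(A)}f_{-}(x,\mu)|$ holds exactly when $\mathrm{Im}(a\bar{c}) = 0$. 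In $a\bar{c}$ the Gaussian moduli are real and positive, while the two chirp phases combine to $e^{iux/b}$; hence $\mathrm{Im}(a\bar{c})$ equals a strictly positive function of $(x,\mu)$ times $\sin(ux/b)$. Therefore, given $m > 0$, the choice $u := \pi b/m$ (nonzero) makes $ux/b \in \pi\mathbb{Z}$ whenever $x \in m\mathbb{Z}$, so $\mathrm{Im}(a\bar c) = 0$ there and $|\mathcal{G}^{(A)}f_{+}| = |\mathcal{G}^{(A)}f_{-}|$ on $m\mathbb{Z}\times\mathbb{R}$. (It cannot hold on all of $\mathbb{R}^{2}$: that would contradict the full-sample uniqueness of Proposition 3.2, which is exactly why a lattice restriction is unavoidable.)

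The third step shows $f_{+}\not\sim f_{-}$. Expanding $f_{\pm}$ from Definition 2.1, the chirp $e^{-i(\frac{a}{2b}t^{2}+\frac{d}{2b}u^{2})}\varphi(t)$ is common to $f_{+}$ and $f_{-}$, and $(1\pm i)e^{iut/b} + (1\mp i)e^{-iut/b} = 2\cos(ut/b) \mp 2\sin(ut/b) = 2\sqrt{2}\cos\!\bigl(\tfrac{ut}{b}\pm\tfrac{\pi}{4}\bigr)$, so
\[
f_{\pm}(t) = 2\sqrt{2}\;e^{-i(\frac{a}{2b}t^{2}+\frac{d}{2b}u^{2})}\,\varphi(t)\,\cos\!\Bigl(\tfrac{ut}{b}\pm\tfrac{\pi}{4}\Bigr).
\]
Since $u \neq 0$, the functions $\cos(\tfrac{ut}{b}+\tfrac{\pi}{4})$ and $\cos(\tfrac{ut}{b}-\tfrac{\pi}{4})$ have disjoint zero sets on $\mathbb{R}$, hence are not scalar multiples of one another; as $\varphi$ and the chirp are nowhere zero, $f_{+} = e^{i\alpha}f_{-}$ is impossible for any $\alpha \in \mathbb{R}$.

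The one place requiring genuine care is the bookkeeping of chirp phase factors — those built into $f_{\pm}$ through $M^{(A)}_{\pm u}$, and those produced by Lemma 4.1 when the generalized modulation is transferred past the STLCT — and checking that each is either annihilated by taking the modulus or is identical for $f_{+}$ and $f_{-}$; together with the explicit Gaussian integral for $\Phi = V_{\varphi}\varphi$, this is the only computational content, everything else being the two-line identity used in the second step.
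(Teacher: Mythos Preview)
Your argument is correct and follows the same overall architecture as the paper: apply the covariance property (Lemma 4.1) to reduce $\mathcal{G}^{(A)}f_{\pm}$ to a linear combination of shifted copies of $V_{\varphi}\varphi$, evaluate the Gaussian auto-STFT explicitly, and compare moduli on $x\in\frac{\pi b}{u}\mathbb{Z}$. The one cosmetic slip is the sign in your identity: in fact $|(1+i)a+(1-i)c|^{2}-|(1-i)a+(1+i)c|^{2}=-8\,\mathrm{Im}(a\bar c)$, but this does not affect the conclusion that equality holds iff $\mathrm{Im}(a\bar c)=0$.

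Where you and the paper differ is only in the packaging of the modulus step. The paper keeps the bracket $(1\pm i)e^{ixu/(2b)}e^{u\mu/(4b^{2})}+(1\mp i)e^{-ixu/(2b)}e^{-u\mu/(4b^{2})}$ and shows, for $x=\frac{\pi b}{u}k$, that the $+$ bracket equals $(-1)^{k}$ times the complex conjugate of the $-$ bracket. Your route via $\mathrm{Im}(a\bar c)=(\text{positive})\cdot\sin(ux/b)$ is a bit cleaner and makes the lattice condition $x\in\frac{\pi b}{u}\mathbb{Z}$ fall out immediately. You also supply a proof that $f_{+}\not\sim f_{-}$ (via the explicit form $f_{\pm}(t)=2\sqrt{2}\,e^{-i(\frac{a}{2b}t^{2}+\frac{d}{2b}u^{2})}\varphi(t)\cos(\tfrac{ut}{b}\pm\tfrac{\pi}{4})$ and disjointness of zero sets), which the paper's proof asserts but does not write out; this is a genuine addition.
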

\begin{proof}
Consider
\[
f_{\pm} = (1 \pm i) M_{u}^{(A)}\varphi + (1 \mp i) M_{-u}^{(A)}\varphi
\]
where the Gaussian window \(\varphi(t) = e^{-t^2}\). Using that the Gaussian is invariant under the Fourier transform, one
may calculate that
\[
\begin{aligned}
V_{\varphi}\varphi(x, \omega) &= \int_{\mathbb{R}} e^{-t^2} e^{-(t - x)^2} e^{-i t \omega} dt \\
&=\int_{\mathbb{R}} e^{-(t + \frac{x}{2})^2} e^{-(t - \frac{x}{2})^2} e^{-i(t + \frac{x}{2})\omega} dt \\
&= e^{-\frac{ix \omega}{2}} e^{-\frac{x^2}{2}} \int_{\mathbb{R}} e^{-2t^2} e^{-i t \omega} dt \\
&=e^{-\frac{ix \omega}{2} } e^{-\frac{x^2}{2}} \sqrt{\frac{\pi}{2}} e^{-\frac{\omega^2}{8}}\\
&=\frac{1}{2}e^{-\frac{ix \omega}{2}}e^{-\frac{1}{2}(x^{2}+\frac{1}{4}\omega^{2})}.
\end{aligned}
\]

By the linearity and the covariance property of the STLCT, we obtain
\begin{align*}
\mathcal{G}^{(A)} f_{\pm}(x, \mu) 
&= (1 \pm i)\mathcal{G}^{(A)}(M_{u}^{(A)} \varphi)(x, \mu) + (1 \mp i)\mathcal{G}^{(A)}(M_{-u}^{(A)} \varphi)(x, \mu) \\
&= \frac{1}{\sqrt{ b}} e^{-\frac{id}{2 b} (u^2 - \mu^2)} \bigg[  (1 \pm i)V_{\varphi} \varphi\left(x, \frac{1}{b}(\mu - u)\right) \\
&\quad + (1 \mp i)V_{\varphi} \varphi\left(x, \frac{1}{b}(\mu + u)\right) 
\bigg] \\
&= \frac{1}{2\sqrt{ b}} e^{-\frac{id}{2 b} (u^2 - \mu^2)} \bigg[ 
(1 \pm i)e^{-ix\frac{\mu-u}{2b}}e^{-\frac{1}{2}(x^{2}+\frac{(\mu-u)^{2}}{4b^{2}})} \\
&\quad + (1 \mp i)e^{-ix\frac{u+\mu}{2b}}e^{-\frac{1}{2}(x^{2}+\frac{(u+\mu)^{2}}{4b^{2}})} 
\bigg] \\
&= \frac{1}{2\sqrt{ b}} e^{-\frac{id}{2 b} (u^2 - \mu^2)} 
 e^{-ix\frac{\mu}{2b} - \frac{1}{2}x^{2} - \frac{u^{2}+\mu^{2}}{8b^{2}}} \bigg[ 
(1 \pm i)e^{ix\frac{u}{2b}}e^{\frac{u\mu}{4b^{2}}} \\
&\quad + (1 \mp i)e^{-ix\frac{u}{2b}}e^{-\frac{u\mu}{4b^{2}}} 
\bigg]
\end{align*}

If $x=\frac{\pi b}{u}k$, where $k\in \mathbb{Z}$, then it holds that
\begin{align*}
& (1 + i)e^{ix\frac{u}{2b}+\frac{u\mu}{4b^{2}}}+(1- i)e^{-ix\frac{u}{2b}-\frac{u\mu}{4b^{2}}}\\
&= (1+i)e^{\frac{u}{2b}(ix+\frac{\mu}{2b})}+(1- i)e^{-\frac{u}{2b}(ix+\frac{\mu}{2b})}\\
&=(1+i)e^{\frac{u\mu}{4b^{2}}}i^{k}+(1- i)e^{-\frac{u\mu}{4b^{2}}}(-i)^{k}\\
&=\overline{(1-i)e^{\frac{u\mu}{4b^{2}}}(-i)^{k}+(1- i)e^{-\frac{u\mu}{4b^{2}}}i^{k}}\\
&=(-1)^{k}\overline{(1-i)e^{\frac{u\mu}{4b^{2}}}i^{k}+(1- i)e^{-\frac{u\mu}{4b^{2}}}(-i)^{k}}\\
&=(-1)^{k}\overline{(1 -i)e^{ix\frac{u}{2b}+\frac{u\mu}{4b^{2}}}+(1+ i)e^{-ix\frac{u}{2b}-\frac{u\mu}{4b^{2}}}}.
\end{align*}
It follows that
\begin{equation*}
 |\mathcal{G}^{(A)}f_{+}(x, \mu) |=| \mathcal{G}^{(A)}f_{-}(x, \mu) |
\end{equation*}
for all $x\in \frac{\pi b}{u}\mathbb{Z}$ and $\mu\in\mathbb{R}$.
\end{proof}

% \vspace{5mm}
\begin{figure}
  \centering
  \includegraphics[width=9cm]{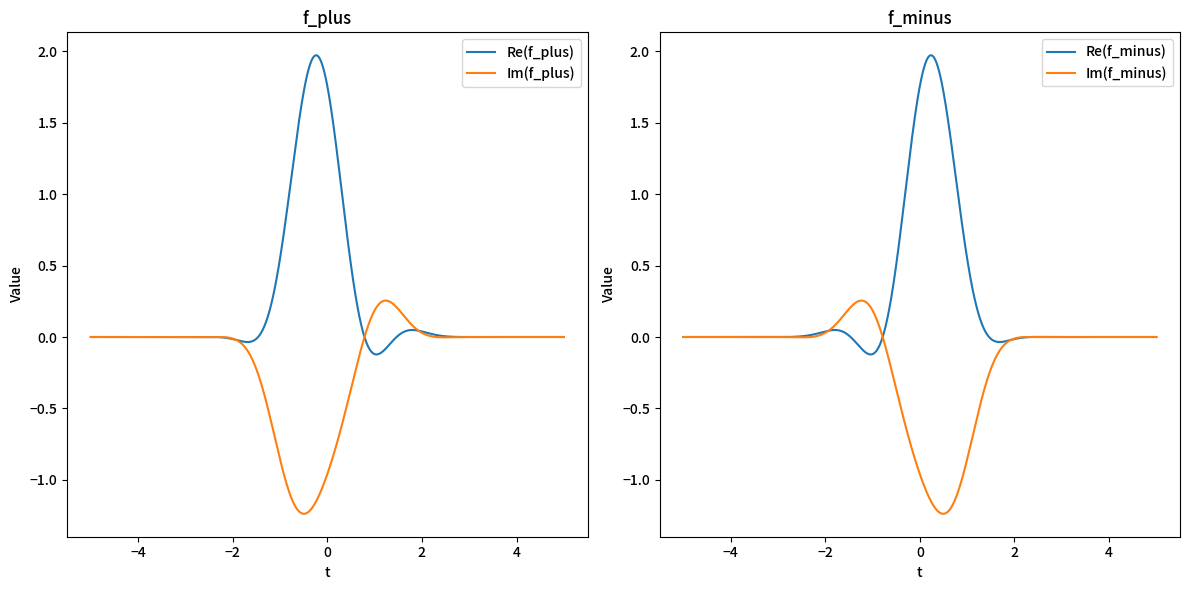}\\
  \begin{center}
  Fig. \(f_{\pm} = (1 \pm i) M_{u}^{(A)}\varphi + (1 \mp i) M_{-u}^{(A)}\varphi\) with \(A=(1, 1, 0, 1)\)
  \end{center}
\end{figure}

However, if we turn to focus on functions in general band-limited function spaces \(L^{p}([-B, B])\),  STLCT with the Gaussian window can still be accomplished on a uniform lattice.

\begin{thm}
Let \(p \in [1, \infty)\), \(B > 0\), and \(m \in \left(0, \frac{1}{4B}\right)\). Then, the following are equivalent for \(f, g \in L^{p}([-B, B])\):  
\begin{enumerate}
    \item [(1)] \(f = e^{i\alpha} g\) for some \(\alpha \in \mathbb{R}\); 
    \item [(2)] \(|\mathcal{G}^{(A)}f| = |\mathcal{G}^{(A)}g|\) on \(\mathbb{N} \times mb\mathbb{Z}\).
\end{enumerate}   
\end{thm}

\begin{proof}

 By the definition of STLCT and its connection to STFT:
\[
V_g^{(A)} f(x, \mu) = \frac{1}{\sqrt{b}}\exp\left(i\frac{d}{2b}\mu^2\right) V_g\left( \tilde{f}\right)\left(x, \frac{1}{b}\mu\right),
\]
where \( \tilde{f}(t) = f(t) e^{i\frac{a}{2b}t^2} \). Due to the analyticity of the Gaussian window \( \varphi \), the STLCT magnitude squared becomes:
 \[
|V_\varphi^{(A)} f(x, \mu)|^2 = b\left| V_\varphi \tilde{f}\left(x, \frac{1}{b}\mu\right) \right|^2.
\] The original STFT sampling lattice in Theorem 1.1 of \cite{W} is \( \mathbb{N} \times m \mathbb{Z} \). For STLCT, the frequency axis scaling \( \frac{1}{b}\mu \) preserves the lattice structure, maintaining consistency with the STFT sampling density.

Since \( \tilde{f} \) and \( \tilde{h} \) remain belong to 
\(L^{p}(-B, B)\) and STFT phase retrieval uniqueness holds for lattice sampling (original Theorem 1.1), we obtain:
\[
 \left| V_\varphi \tilde{f}\left(x, \frac{1}{b}\mu\right) \right| = \left| V_\varphi \tilde{h}\left(x, \frac{1}{b}\mu\right) \right| \quad \forall (x, \mu) \in \mathbb{N} \times mb\mathbb{Z}.
   \]
   By the uniqueness conclusion of the original theorem, there exists \( \tau \in \mathbb{T} \) such that \( \tilde{f} = \tau \tilde{h} \), i.e.,
   \[
   f(t) e^{i\frac{a}{2b}t^2} = \tau h(t) e^{i\frac{a}{2b}t^2}.
   \]
   Canceling the common phase factor \( e^{i\frac{a}{2b}t^2} \), we conclude \( f = \tau h \).
\end{proof}

\subsection{Proof of the proposition 3.2}

Before proving this proposition, we first introduce the necessary lemmas and notations.
Recall that a set \( \Lambda \subseteq \mathbb{C}^{\mathbb{d}} \) is said to be a uniqueness set for \( S \subseteq \mathcal{O}(\mathbb{C}^{\mathbb{d}}) \). if for every \( F \in S \) it holds that
\[
(F(\lambda) = 0 \ \forall \lambda \in \Lambda) \implies F = 0.
\] 

The following lemma establishes that any entire function vanishing on a positive measure subset of \(\mathbb{R}^{\mathbb{d}}\) must be identically zero. This extends the classical identity theorem for analytic functions to higher dimensions. 

\begin{lem}(\cite{FCM}, Lemma 2.3)
Let \(\Lambda \subseteq \mathbb{R}^{\mathbb{d}}\) be a Lebesgue-measurable set such that \(L^{\mathbb{d}}(\Lambda) > 0\), where \(L^{\mathbb{d}}\) denotes the \(\mathbb{d}\)-dimensional Lebesgue measure. Then \(\Lambda\) is a uniqueness set for \( \mathcal{O}(\mathbb{C}^{\mathbb{d}})\).
\end{lem}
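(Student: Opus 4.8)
The plan is to prove the statement by induction on the dimension $d$, stripping off one real coordinate at a time so as to reduce everything to the classical identity theorem in one complex variable. For the base case $d=1$, I would note that a measurable set $\Lambda\subseteq\mathbb{R}$ with $L^{1}(\Lambda)>0$ is infinite and possesses an accumulation point in $\mathbb{R}$ (for instance any of its Lebesgue density points); hence an entire function $F\in\mathcal{O}(\mathbb{C})$ that vanishes on $\Lambda$ vanishes on a set with an accumulation point and is therefore identically zero, so $\Lambda$ is a uniqueness set for $\mathcal{O}(\mathbb{C})$.

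For the inductive step, assuming the claim in dimension $d-1$, I would write points of $\mathbb{R}^{d}$ as $(x',x_{d})\in\mathbb{R}^{d-1}\times\mathbb{R}$, put $\Lambda_{x'}:=\{x_{d}\in\mathbb{R}:(x',x_{d})\in\Lambda\}$, and apply Fubini's theorem to
\[
0<L^{d}(\Lambda)=\int_{\mathbb{R}^{d-1}}L^{1}(\Lambda_{x'})\,dx',
\]
which forces the set $E:=\{x'\in\mathbb{R}^{d-1}:L^{1}(\Lambda_{x'})>0\}$ to have positive $(d-1)$-dimensional Lebesgue measure. Now let $F\in\mathcal{O}(\mathbb{C}^{d})$ vanish on $\Lambda$. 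For each fixed $x'\in E$ the function $z_{d}\mapsto F(x',z_{d})$ is entire on $\mathbb{C}$ and vanishes on $\Lambda_{x'}$, hence identically by the base case, so $F$ vanishes on $E\times\mathbb{R}$. Next, for each fixed $x_{d}\in\mathbb{R}$ the function $z'\mapsto F(z',x_{d})$ is entire on $\mathbb{C}^{d-1}$ and vanishes on $E$, hence identically by the induction hypothesis, so $F(z',x_{d})=0$ for all $z'\in\mathbb{C}^{d-1}$. Finally, for each fixed $z'\in\mathbb{C}^{d-1}$ the entire function $z_{d}\mapsto F(z',z_{d})$ vanishes on all of $\mathbb{R}$ and is therefore identically zero, so $F\equiv0$ on $\mathbb{C}^{d}$, completing the induction.

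The step I expect to require the most care is the very first reduction in the inductive step: justifying rigorously that a positive-measure family of one-dimensional slices survives — that is, the measurability of $x'\mapsto L^{1}(\Lambda_{x'})$ for the measurable set $\Lambda$ and the Fubini identity above — together with the fact that positive one-dimensional measure is already enough to trigger the identity theorem (this is exactly where the Lebesgue density theorem, and hence the base case, is invoked). Everything after that is a routine coordinate-by-coordinate bootstrap; in particular no form of Hartogs' theorem is needed, because $F$ is assumed jointly holomorphic on $\mathbb{C}^{d}$ from the outset, so each restriction to a complex coordinate axis through a point is automatically entire.
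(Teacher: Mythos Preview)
Your argument is correct: the base case via Lebesgue density plus the one-variable identity theorem, then Fubini to extract a positive-measure family of slices, followed by the three-step coordinate bootstrap, is a clean and complete proof. The only minor remark is that in the Fubini step one should note that $\Lambda_{x'}$ is measurable only for \emph{almost every} $x'$, not literally every $x'$; this changes nothing, since the set $E$ still has positive measure and the rest of the argument only uses $x'\in E$.

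As for comparison: the paper does not supply its own proof of this lemma. It is quoted verbatim as Lemma~2.3 of \cite{FCM} and used as a black box in the proof of Proposition~3.2, so there is no in-paper argument to compare against. Your induction-on-dimension approach is in fact the standard route to this result and is essentially what one finds in the cited source, so nothing is lost by including it.
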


\begin{lem}(\cite{FCM}, Lemma 3.2)
Let \(F: \mathbb{R}^{\mathbb{d}} \times \mathbb{C}^{\mathbb{d}} \to \mathbb{C}\) be a function subjected to the following assumptions:
\begin{enumerate}
    \item \(F(\cdot, z)\) is Lebesgue-measurable for every \(z \in \mathbb{C}^{\mathbb{d}}\);
    \item \(F(t, \cdot)\) is an entire function for every \(t \in \mathbb{R}^{\mathbb{d}}\);
    \item The function \(z \mapsto \int_{\mathbb{R}^{\mathbb{d}}} |F(t, z)| \, dt\) is locally bounded, i.e., for every \(z_{0} \in \mathbb{C}^{\mathbb{d}}\), there exists \(\delta > 0\) such that 
    \[
    \sup_{\substack{z \in \mathbb{C}^{\mathbb{d}} \\ |z - z_{0}| \leq \delta}} \int_{\mathbb{R}^{\mathbb{d}}} |F(t, z)| \, dt < \infty.
    \]
\end{enumerate}
Then \(z \mapsto \int_{\mathbb{R}^{\mathbb{d}}} F(t, z) \, dt\) defines an entire function of \(\mathbb{d}\) complex variables.
\end{lem}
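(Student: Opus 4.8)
The plan is to show that $G(z):=\int_{\mathbb{R}^{\mathbb{d}}}F(t,z)\,dt$ reproduces itself through a Cauchy integral formula on small polydiscs, which forces $G$ to be holomorphic near every point. First I would observe that $G$ is a well-defined function $\mathbb{C}^{\mathbb{d}}\to\mathbb{C}$: by assumption (3) every $z_{0}$ has a neighbourhood on which $z\mapsto\int_{\mathbb{R}^{\mathbb{d}}}|F(t,z)|\,dt$ is bounded, so in particular $F(\cdot,z)\in L^{1}(\mathbb{R}^{\mathbb{d}})$ for every $z$ (assumption (1) being what makes the integral meaningful), and hence $G(z)\in\mathbb{C}$.

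Next, fix $z_{0}\in\mathbb{C}^{\mathbb{d}}$ and, shrinking the radius supplied by assumption (3), choose $\delta>0$ with $C:=\sup_{|z-z_{0}|\le\delta\sqrt{\mathbb{d}}}\int_{\mathbb{R}^{\mathbb{d}}}|F(t,z)|\,dt<\infty$. Let $T:=\{w\in\mathbb{C}^{\mathbb{d}}:|w_{j}-z_{0,j}|=\delta,\ j=1,\dots,\mathbb{d}\}$ be the distinguished boundary of the polydisc $\{|w_{j}-z_{0,j}|\le\delta\}$, which is contained in the ball $\{|w-z_{0}|\le\delta\sqrt{\mathbb{d}}\}$. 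Since $F(t,\cdot)$ is entire by assumption (2), the multivariate Cauchy integral formula gives, for every $t\in\mathbb{R}^{\mathbb{d}}$ and every $z$ in the open polydisc,
\[
F(t,z)=\frac{1}{(2\pi i)^{\mathbb{d}}}\int_{T}\frac{F(t,w)}{\prod_{j=1}^{\mathbb{d}}(w_{j}-z_{j})}\,dw .
\]
I would then integrate this identity in $t$ over $\mathbb{R}^{\mathbb{d}}$ and interchange the two integrations. For $z$ with $|z_{j}-z_{0,j}|\le\delta/2$ one has $\prod_{j}|w_{j}-z_{j}|\ge(\delta/2)^{\mathbb{d}}$ on $T$, whence
\[
\int_{\mathbb{R}^{\mathbb{d}}}\int_{T}\frac{|F(t,w)|}{\prod_{j}|w_{j}-z_{j}|}\,|dw|\,dt\ \le\ (4\pi)^{\mathbb{d}}\,C\ <\ \infty ,
\]
so Fubini's theorem applies and, using $\int_{\mathbb{R}^{\mathbb{d}}}F(t,w)\,dt=G(w)$, yields
\[
G(z)=\frac{1}{(2\pi i)^{\mathbb{d}}}\int_{T}\frac{G(w)}{\prod_{j=1}^{\mathbb{d}}(w_{j}-z_{j})}\,dw,\qquad |z_{j}-z_{0,j}|\le\tfrac{\delta}{2},
\]
where $G|_{T}$ is integrable since $\int_{T}|G(w)|\,|dw|\le\int_{T}\int_{\mathbb{R}^{\mathbb{d}}}|F(t,w)|\,dt\,|dw|\le(2\pi\delta)^{\mathbb{d}}C<\infty$.

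Finally, the right-hand side above is a Cauchy-type integral of an integrable function over the fixed cycle $T$, hence a holomorphic function of $z$ on the open polydisc $\{|z_{j}-z_{0,j}|<\delta/2\}$ (differentiate under the integral sign: the integrand is holomorphic in $z$ with $z$-derivatives bounded uniformly for $w\in T$ and $z$ in a compact subset). Since $G$ agrees with this holomorphic function near $z_{0}$ and $z_{0}$ was arbitrary, $G$ is entire. (Equivalently, one could first read off continuity of $G$ from the same representation and then conclude via Morera's theorem applied in each variable separately together with Osgood's lemma.) The only step that is not purely formal is the Fubini interchange, and this is exactly where hypothesis (3) enters: it furnishes a bound on $z\mapsto\int|F(t,z)|\,dt$ rather than a pointwise integrable majorant, and it is the compactness of the polytorus $T$ inside a local-boundedness ball that turns this into the absolute integrability needed to swap the integrals.
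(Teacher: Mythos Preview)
The paper does not supply its own proof of this lemma: it is quoted verbatim from \cite{FCM} (Lemma~3.2 there) and used as a black box, so there is no argument in the paper to compare against. Your proof is correct and is in fact the standard route---represent $F(t,\cdot)$ by the Cauchy formula on a polytorus contained in a local-boundedness ball, then swap the $t$- and $w$-integrals via Fubini to obtain a Cauchy-type representation for $G$, which is automatically holomorphic.

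One small technical point you glide over: to invoke Fubini/Tonelli on $\mathbb{R}^{\mathbb{d}}\times T$ you need joint measurability of $(t,w)\mapsto F(t,w)$, whereas the hypotheses only give separate measurability in $t$ and continuity (via holomorphy) in $w$. This is harmless---a function measurable in the first variable and continuous in the second is jointly measurable (Carath\'eodory), so the interchange is justified---but it is worth one sentence if you want the argument to be self-contained.
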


By using proposition 3.1, we gain that $L^2(\R)$ is phase retrievable under the short-time linear canonical transform sampling on $\R$ as stated in the following proposition. 
\begin{proof}
Consider the absolute value of the STLCT of a function $(f \in L^{2}(\mathbb{R}^{d'})$ with respect to the window function 
$\varphi$ in the second argument, i.e., the map $(\mu \mapsto|V_{\varphi}^{(A)} f(x, \mu)|^{2})$ for some fixed $(x \in \mathbb{R}^{d'}).$ Using the definition of the STLCT as the LCT of the product of \(f\) with a shift of $\varphi$ yields the identity, $(B=(-d, b, -\frac{1}{b}, 0)),\ (C=(0, b, -\frac{1}{b}, d)).$
\begin{align*}
|V_\varphi^{(A)}(x, \mu)|^2 &= L_B \left( L_A f \cdot T_\mu \overline{L_C \varphi} \right)(d\mu-x) 
\overline{ L_B \left( L_A f \cdot T_\mu \overline{L_C \varphi} \right)(d\mu-x)}\\
&=\left[L_B \left( L_A f \cdot T_\mu \overline{L_C \varphi} \right) 
\mathcal{R}L_B \left( \overline{L_A f} \cdot (T_\mu L_C \varphi) \cdot e^{\frac{id}{b}t^{2}}\right)\right](d\mu-x) \\
&=\left[L_B \left( L_A f \cdot T_\mu \overline{L_C \varphi} \right)
L_B \mathcal{R}\left( \overline{L_A f} \cdot (T_\mu L_C \varphi) \cdot e^{\frac{id}{b}t^{2}}\right)\right](d\mu-x) \\ 
\end{align*}
Let \(F_{\mu}=L_A f \cdot T_\mu \overline{L_C \varphi}\), and \( \overline{L_A (f)(u)} =e^{-\frac{id}{b}u^{2}}\mathcal{R}L_{A}(\overline{f}e^{-\frac{ia}{b}t^{2}})(u).
\)
Thus, the above equality yields to 
\begin{equation*}
|V_\varphi^{(A)}(x, \mu)|^2 =L_B\left( \mathcal{R}(\overline{F_{\mu}}e^{\frac{id}{b}t^{2}})  *_B F_{\mu}\right)(d\mu-x)
\end{equation*}
The convolution is given by
\begin{equation*}
\left(\mathcal{R}(\overline{F_{\mu}}e^{\frac{id}{b}t^{2}}) *_B F_{\mu}\right)(t)=
\frac{\overline{\lambda}_{B}(t)}{\sqrt{2\pi b}} \left(\mathcal{R}(\overline{F_{\mu}}e^{\frac{id}{b}t^{2}})e^{-\frac{i d}{2 b}t^{2}}\right) * \left( F_{\mu}e^{-\frac{i d}{2 b} t^{2}} \right)\\
\end{equation*}
we have 
\begin{align*}
&\left(\mathcal{R}(\overline{F_{\mu}}e^{\frac{id}{b}t^{2}})e^{-\frac{i d}{2 b}t^{2}}\right) * \left( F_{\mu}e^{-\frac{i d}{2 b} t^{2}} \right)(s) \\
& = \int_{\mathbb{R}^{d'}}\overline{L_A f(-(s-u))}e^{\frac{i d}{2 b} t^{2}}
L_C \varphi(-(s-u)-\mu)L_A f(u)\overline{L_C \varphi(u-\mu)}e^{-\frac{i d}{2 b} t^{2}}du\\
&=\int_{\mathbb{R}^{d'}}F_{s}(u)\overline{T_{\mu}\Phi_{s}(u)} du\\
&=\langle F^{A}_{s}, T_{\mu}\Phi^{C}_{s} \rangle,
\end{align*}
where $F^{A}_{s}=L_A f T_{s}\overline{L_A f}$, $\Phi^{C}_{s}=L_C \varphi T_{s}\overline{L_C \varphi}$.

Hence, if \(|V^{(A)}_{\varphi} f(x, \mu)| = |V^{(A)}_{\varphi} h(x, \mu)|\), then
\[
\langle F^{A}_{s}, T_{\mu}\Phi^{C}_{s} \rangle=\langle H^{A}_{s}, T_{\mu}\Phi^{C}_{s} \rangle
\]
It follows that for every fixed $s\in \mathbb{R}$
\[
\langle F^{A}_{s}-H^{A}_{s}, T_{\mu}\Phi^{C}_{s} \rangle=(F^{A}_{s}-H^{A}_{s})\ast(\mathcal{R}(\Phi^{C}_{s}))(\mu)=0
\]
Applying the Fourier transform yields the relation
\[
\mathcal{F}((F^{A}_{s}-H^{A}_{s})\ast(\mathcal{R}(\Phi^{C}_{s})))=\mathcal{F}(F^{A}_{s}-H^{A}_{s})(\mu)
\mathcal{F}(\mathcal{R}(\Phi^{C}_{s}))(\mu)=0
\]
which holds for every $x\in \mathbb{R}^{d'}$ . Observe that the function $\mathcal{R}(\Phi^{C}_{s})$ has Gaussian decay since $g\neq0$ does not vanish identically. Applying the Lemma 3.2 of \cite{FCM},
The map \(\mu \mapsto \mathcal{F}(\mathcal{R}(\Phi^{C}_{s}))(\mu)\) extends from \(\mathbb{R}^{d'}\) to an entire function which does not vanish identically. In particular, the set 
\[
Z=\left\{\mu \in \mathbb{R}^{d'}: \mathcal{F}\left(\mathcal{R}\left(\Phi^{C}_{s}\right)\right)(\mu)=0\right\}
\]
has \(d'\)-dimensional Lebesgue measure zero (\cite{FCM} Lemma 2.3), otherwise, 
\(\mathcal{F}\left(\mathcal{R}\left(\Phi^{C}_{s}\right)
\right)\) must vanish identically. Since \(\mathcal{F}(F^{A}_{s}-H^{A}_{s})\) is continuous, it follows that \(\mathcal{F}(F^{A}_{s}-H^{A}_{s})\) must vanish identically. The arbitrariness of \(\mu \in \mathbb{R}^{d'}\) shows that \(\mathcal{F}(F^{A}_{s}-H^{A}_{s})\) vanishes identically for every \(\mu \in \mathbb{R}^{d'}\). Then, for any $s$
\[
L_A f T_{s}\overline{L_A f}=L_A h T_{s}\overline{L_A h} 
\]
holds for almost everywhere.
Let $s=0$, the above implies $L_A f\sim L_A h$, which is equivalent to $f\sim h$. 
\end{proof}

% ------------------------------------------------------------------------
\section*{Statements and Declarations}
The author states that there is no conflict of interest. No datasets were generated or analyzed during the current study.

\section*{Acknowledgments}
The first author gratefully acknowledges the support of the Chern Institute of Mathematics Visiting Scholar Program at Nankai University during the completion of this work.

% ------------------------------------------------------------------------
\end{document}